\newtheorem{theorem}{Theorem}[section]
\newtheorem{lemma}[theorem]{Lemma}
\newtheorem{proposition}[theorem]{Proposition}
\newtheorem{remark}[theorem]{Remark}
\theoremstyle{definition}
\newtheorem{definition}[theorem]{Definition}
\def\C{{\mathbb C}}
\title[Carleson measures for Hardy and Bergman spaces]{Carleson measures for Hardy and Bergman spaces in the quaternionic unit ball}
\author[I. Sabadini]{Irene Sabadini}
\address{Irene Sabadini\\ Dipartimento di Matematica\\ Politecnico di Milano\\ Via Bonardi 9\\ I-20133 Milano\\ Italy}
\email{irene.sabadini@polimi.it}
\author[A. Saracco]{Alberto Saracco}
\address{Alberto Saracco\\ Dipartimento di Matematica e Informatica\\ Universit\`a di Parma\\ Parco Area delle Scienze 53/A\\ I-43124 Parma\\ Italy}
\email{alberto.saracco@unipr.it}
\date{\today}
\keywords{Carleson measures, quaternions, Hardy spaces, Bergman spaces, pseudohyperbolic metric.}
\subjclass[2000]{30G35, 30H10, 30H20}
\thanks{This paper was partially supported by FIRB 2012 Differential Geometry and Geometric Function Theory 
(financed by Italian MIUR)}
\begin{document}

\begin{abstract}
We study a characterization of slice Carleson measures and of Carleson measures for both the Hardy spaces $H^p(\mathbb B)$ and the Bergman spaces $\mathcal A^p(\mathbb B)$ of the quaternionic unit ball $\mathbb B$. In the case of Bergman spaces, the characterization is done in terms of the axially symmetric completion of a pseudohyperbolic disc in a complex plane. We also show that a characterization in terms of pseudohyperbolic balls is not possible.
\end{abstract}

\maketitle

\section{Introduction}
Carleson measures have been introduced around 1960 by Carleson, see \cite{carleson}, to prove the corona theorem and to solve interpolation problems. A finite, positive, Borel measure $\mu$ on the open unit disc $\mathbb D\subset\mathbb C$ is called a Carleson measure for the Hardy space $H^p(\mathbb D)$ if
$$
\int_{\mathbb D} |f(z)| d\mu \leq C \| f\|_{H^p}, \qquad f\in H^p(\mathbb D)
$$
where the constant $C$ depends only on $p\in(0,\infty)$. Carleson also proved that the measure $\mu$ is Carleson for $H^p(\mathbb D)$ if and only if for all Carleson squares of sidelenght $1-r$, $r\in (0,1)$, $\theta_0\in[0, 2\pi]$
$$
S(\theta_0, r)=\{\rho e^{i\theta}\in \mathbb D\ \ : \ \ r\leq \rho < 1, \  |\theta-\theta_0 |\leq 1-r\}
$$
the condition
$$
\mu (S(\theta_0, r))\leq C (1-r)
$$
holds for some constant $C>0$. This condition shows that,  for a measure, the property of being Carleson for $H^p(\mathbb D)$ does not depend on $p$.

Later, Hastings \cite{H} (but see also \cite{O} for a wider context) proved a similar characterization for the measures which are Carleson for the Bergman space $\mathcal{A}^p(\mathbb D)$ showing that a finite, positive, Borel measure $\mu$ is a Carleson measure for $\mathcal{A}^p(\mathbb D)$ if and only if
$$
\mu (S(\theta_0, r))\leq C (1-r)^2
$$
for some constant $C>0$ for all Carleson squares $S(\theta_0, r)$ of sidelenght $1-r$. In particular,  for a measure, the property of being Carleson for $\mathcal{A}^p(\mathbb D)$ does not depend on $p$.
\\
The Carleson boxes are clearly not invariant under automorphism of the unit disc, so in order to obtain a characterization of the Carleson measures in an invariant way, one should consider the hyperbolic geometry of the disc and consider instead the pseudohyperbolic discs, see the work by Luecking \cite{luecking} and also \cite{DS}.
A pseudohyperbolic disc $\Delta (z_0, r)$ with center $z_0$ and radius $r>0$ is defined as
$$
\Delta (z_0, r)=\left\{z\in\mathbb C\ \ : \ \ \left|\frac{z-z_0}{1-\bar{z}_0 z}\right|<r\, \right\}.
$$
Luecking showed that a finite, positive measure is Carleson for $\mathcal{A}^p(\mathbb D)$ if and only if
$$
\mu(\Delta (z_0, r)) \leq C |\Delta (z_0, r)|
$$
for some constant $C>0$ depending only on $r$ and for pseudohyperbolic discs $\Delta (z_0, r)$, $z_0\in\mathbb D$, for some (and hence for all) $r\in (0,1)$.
\\
The characterization of Carleson measures of the Bergman spaces was then extended to more general settings, like strongly pseudoconvex domains, see e.g. \cite{CM} and \cite{AbS}.
\\
In this paper we will study, in the quaternionic setting, the characterization of Carleson measures in the Hardy and Bergman spaces on the open unit ball $\mathbb B$. This study was initiated by Arcozzi and Sarfatti in \cite{AS} who treated the case of the Hardy space $H^2(\mathbb B)$. Here we extend their result to the case of $H^p(\mathbb B)$ and to the case of the Bergman spaces $\mathcal{A}^p(\mathbb B)$,  for any $p$. It is important to note that, in the complex case, to have a characterization of Carleson measures in $H^2(\mathbb D)$ allows to prove the result for $H^p(\mathbb D)$ for any $p$. In this setting, the method that applies in the complex case is not so immediately applicable as it is not true, in general, that the $p/2$-power of a slice regular function is slice regular.

Furthermore, it is a delicate question to understand which is an appropriate notion of pseudohyperbolic metric and pseudohyperbolic ball. In fact, the function $\rho:\ \mathbb B\times \mathbb B\to \mathbb R^+$ defined by
\[
\rho(q,\alpha)=\left|\frac{\alpha-q}{1-q\bar\alpha}\right|
\]
is a distance only if $q$ belongs to the complex plane $\mathbb C_I$ containing $\alpha$.
We can then define the disc $\Delta_I (\alpha, r)\subset \mathbb C_I$ with center $\alpha$ and radius $r\in (0,1)$ and its axially symmetric completion $\Delta (\alpha, r)$.
The distance $\rho$ (later on denoted by $\rho_I$) is used to show a result which characterizes Carleson measure $\mu$ in the Bergman spaces $\mathcal{A}^p(\mathbb B)$ as the measures for which
$$
\mu(\Delta (\alpha, r))\leq C |\Delta_I (\alpha, r)|
$$
for some (and hence for all) $r\in(0,1)$, where $C$ is a constant depending on $r$ only.
More in general, when $q\in\mathbb B$, the function $\rho$ should be substituted by its slice regular extension, namely by the function, still denoted by $\rho$,
$$
\rho(q,\alpha)=\left|(1-q\bar\alpha)^{-*} * (q-\alpha)\right|.
$$
The function $\rho(q,\alpha)$ is now a distance in $\mathbb B$, see \cite{AS1}, which allows to define hyperbolic balls in $\mathbb H$. However, we show that it is not possible to characterize Carleson measures in terms of these balls. This is a major difference with respect to the complex case  and shows that the pseudohyperbolic metric does not carry the intrinsic information needed to characterize Carleson measures.\\
The plan of the paper is as follows. After the introduction and some preliminary results, in Section 3 we characterize slice Carleson measures and then Carleson measures in $H^p(\mathbb B)$ for any $p$. In Section 4 we prove the characterization in the case of the Bergman spaces $\mathcal{A}^p(\mathbb B)$ for any $p$ in terms of the axially symmetric completion of a pseudohyperbolic disc in a complex plane. In Section 5 we study the quaternionic analog of pseudohyperbolic balls, also showing that it is not possible to characterize Carleson measures for the Bergman spaces in terms of these balls.
\\
{\bf Acknowledgments}. The authors thank Giulia Sarfatti for useful discussions.
\section{Preliminary results}
There are several ways to generalize the notion of holomorphy to the quaternionic setting. In the past few years, slice regularity attracted the attention of several researchers and this will be the notion of holomorphy we will consider in this paper. We repeat here some useful definitions and results on this function theory, see \cite{book_functional}, \cite{bookgss}.\\
First of all, we recall that the skew field of quaternions $\mathbb H$ is defined as the set of elements $q=x_0+ x_1 i+x_2 j+x_3k$ where $x_\ell\in\mathbb R$, $\ell=0,\ldots ,3$ and $i^2=j^2=-1$, $ij=-ji=k$. For any $q\in\mathbb H$, its conjugate $\bar q$ is defined as $\bar q=x_0- x_1 i-x_2 j-x_3k$, moreover the norm of $q$
is the Euclidean norm $|q|=(x_0^2+x_1^2+x_2^2+x_3^2)^{1/2}$. It is immediate that $|q|^2=q\bar q=\bar q q$.
\\
By $\mathbb{S}$ we denotes the $2$-dimensional unit sphere of purely imaginary quaternions, namely
$$\mathbb{S}=\{q = i x_{1} + j x_{2} + k x_{3}, \mbox{ such that } x_{1}^{2}+x_{2}^{2}+x_{3}^{3}=1\}.$$
An element $I\in \mathbb{S}$ is such that $I^{2}=-1$ and thus the elements of $\mathbb{S}$ are also
called imaginary units. For any fixed $I\in\mathbb{S}$ the set $\mathbb{C}_I:=\{x+Iy; \ |\ x,y\in\mathbb{R}\}$ is a complex plane, moreover $\mathbb{H}=\bigcup_{I\in\mathbb{S}} \mathbb{C}_I$. Obviously, the real axis belongs to $\mathbb{C}_I$ for every $I\in\mathbb{S}$.\\
Any non real quaternion $q$ is uniquely associated to an element in $\mathbb{S}$, specifically we can set $I_q:=( i x_{1} + j x_{2} + k x_{3})/|  i x_{1} + j x_{2} + k x_{3}|$. It is obvious that $q$ belongs to the complex plane $\mathbb{C}_{I_q}$. Thus the quaternion $q=x_0+ x_1 i+x_2 j+x_3k$ can be written as $q =x_0+ I_q y_0$ where $y_0=
|  i x_{1} + j x_{2} + k x_{3}|$. The set of elements of the form $x_0+Iy_0$ when $I$ varies in $\mathbb S$ will be denoted by $[q]$; it is a $2$-dimensional sphere consisting of elements with the same real part and the same modulus as $q$. An open set $U$ is said to be axially symmetric if for any $q\in U$ the whole sphere $[q]$ is contained in $U$.
\\

\begin{definition}
 Let $U$ be an open set in $\mathbb{H}$ and $f:\, U\to \mathbb{H}$ be real differentiable. The function $f$ is said to be (left) slice regular  or (left) slice hyperholomorphic
if for every $I\in \mathbb{S}$, its restriction $f_{I}$ to the complex plane ${\mathbb{C}}_{I}=\mathbb{R}+ I \mathbb{R}$ passing through origin
and containing $I$ and $1$ satisfies
$$\overline{\partial}_{I}f(x+I y):=\frac{1}{2}\left (\frac{\partial}{\partial x}+I \frac{\partial}{\partial y}\right )f_{I}(x+I y)=0,$$
on $U\cap \mathbb{C}_{I}$.
Analogously, a function is said to be right slice regular in $U$ if
$$(f_{I}{\overline{\partial}}_{I})(x+I y):=\frac{1}{2}\left (\frac{\partial}{\partial x}f_{I}(x +I y)+\frac{\partial}{\partial y}f_{I}(x+I y) I\right )=0,$$
on $U\cap \mathbb{C}_{I}$.
\end{definition}
In the sequel, given an open set $U\subset\mathbb H$ and $I\in\mathbb S$, we will sometimes write $U_I$ to denote $U\cap\mathbb C_I$.\\
Let us consider
 $I,J\in\mathbb{S}$ with $I$ and $J$ orthogonal, so that $I,J,IJ$ is an orthogonal
basis of $\mathbb H$. Let us write the restriction  $f_I(x+Iy)=f(x+Iy)$ of $f$ to the complex plane $\mathbb C_I$  as
$$
f=f_0+If_1+Jf_2+Kf_3=F+GJ
$$
where $f_0+If_1=F$, and $f_2+If_3=G$.
This observation immediately gives the following result:
\begin{lemma}[Splitting Lemma]
If $f$ is a slice regular function
on $U$, then for every $I \in \mathbb{S}$, and every $J\in\mathbb{S}$,
orthogonal to $I$, there are two holomorphic functions
$F,G:U\cap \mathbb{C}_I \to \mathbb{C}_I$ such that for any $z=x+Iy$
$$f_I(z)=F(z)+G(z)J.$$
\end{lemma}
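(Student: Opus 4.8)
The plan is to feed the decomposition $f_I = F + GJ$ directly into the slice regularity condition and read off a pair of Cauchy--Riemann equations, one for $F$ and one for $G$. The structural facts I will rely on are that $\{1,I,J,IJ\}$ is an orthogonal basis of $\mathbb{H}$, so that $\mathbb{H} = \mathbb{C}_I \oplus \mathbb{C}_I J$ as a real vector space with $\mathbb{C}_I \cap \mathbb{C}_I J = \{0\}$, together with the fact that $I$ commutes with every element of $\mathbb{C}_I$.

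First I would record $f_I = F + GJ$ with $F = f_0 + If_1$ and $G = f_2 + If_3$, where $f_0,\dots,f_3$ are the real components of $f$, so that $F$ and $G$ are $\mathbb{C}_I$-valued on $U\cap\mathbb{C}_I$. Applying the operator $\frac{\partial}{\partial x} + I\frac{\partial}{\partial y}$ term by term, and using that the partial derivatives act componentwise on the real-valued $f_\ell$ while $J$ is constant, I obtain
\[
\left(\frac{\partial}{\partial x} + I\frac{\partial}{\partial y}\right)(F+GJ) = \left(\frac{\partial F}{\partial x} + I\frac{\partial F}{\partial y}\right) + \left(\frac{\partial G}{\partial x} + I\frac{\partial G}{\partial y}\right)J.
\]
The only point requiring care is that, since $\partial G/\partial y \in \mathbb{C}_I$ and $I$ commutes with $\mathbb{C}_I$, the factor $I$ may be absorbed into $\partial G/\partial y$ but must remain to the left of $J$, so that the second summand stays inside $\mathbb{C}_I J$.

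By hypothesis the left-hand side vanishes on $U\cap\mathbb{C}_I$. The first summand lies in $\mathbb{C}_I$ and the second in $\mathbb{C}_I J$, and these two subspaces meet only in $0$; hence each summand vanishes identically, giving
\[
\frac{\partial F}{\partial x} + I\frac{\partial F}{\partial y} = 0, \qquad \frac{\partial G}{\partial x} + I\frac{\partial G}{\partial y} = 0.
\]
These are exactly the Cauchy--Riemann equations for $\mathbb{C}_I$-valued functions relative to the complex structure determined by $I$, so $F$ and $G$ are holomorphic on $U\cap\mathbb{C}_I$, as claimed.

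I do not expect a genuine obstacle here beyond careful bookkeeping: the single delicate step is the noncommutativity, namely keeping $I$ to the left of $J$ and then invoking the splitting $\mathbb{H} = \mathbb{C}_I \oplus \mathbb{C}_I J$ to decouple the vanishing condition into two independent equations. Once that decomposition is in place the conclusion is immediate.
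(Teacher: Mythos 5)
Your proof is correct and follows exactly the route the paper takes: the paper introduces the same decomposition $f_I = F + GJ$ with $F = f_0 + If_1$, $G = f_2 + If_3$ and asserts the lemma follows immediately, leaving the verification implicit. You have simply filled in the (correct) details of that verification, namely applying $\partial_x + I\partial_y$, keeping $I$ to the left of the constant $J$, and using the real direct-sum splitting $\mathbb{H} = \mathbb{C}_I \oplus \mathbb{C}_I J$ to obtain the two Cauchy--Riemann equations.
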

Another useful result is the following:
\begin{theorem}
 A function $f:\ {B}(0;{r}) \to \mathbb{H}$ is slice regular on ${B}(0;{r})$ if and only if it has a series representation of the form
\begin{equation}\label{powerseries}
f(q)=\sum_{n=0}^{\infty}q^{n}\frac{1}{n !}\cdot \frac{\partial^{n} f}{\partial x^{n}}(0)=\sum_{n=0}^{\infty}q^{n} a_n,
\end{equation}
uniformly convergent on ${B}(0;{r})$.
\end{theorem}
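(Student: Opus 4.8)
The plan is to prove the two implications separately, with the converse direction being essentially immediate and the forward direction resting on the Splitting Lemma combined with classical one-variable complex analysis.

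For the implication from the series representation to slice regularity, I would fix $I\in\mathbb S$ and write $q=x+Iy\in\mathbb C_I$. Each monomial $q^n a_n$ restricts on $\mathbb C_I$ to $(x+Iy)^n a_n$, which, viewed as a function of the complex variable $z=x+Iy$, is holomorphic since $z\mapsto z^n$ is holomorphic and $a_n$ multiplies on the right as a constant. Uniform convergence of the series on $B(0;r)$ permits term-by-term application of $\overline\partial_I$, and since $\overline\partial_I(q^n a_n)=0$ for every $n$, I obtain $\overline\partial_I f=0$ on $B(0;r)\cap\mathbb C_I$ for every $I$, i.e. $f$ is slice regular.

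For the converse, I would fix $I\in\mathbb S$ and an orthogonal $J\in\mathbb S$. The Splitting Lemma supplies holomorphic functions $F,G:B(0;r)\cap\mathbb C_I\to\mathbb C_I$ with $f_I(z)=F(z)+G(z)J$. Classical complex analysis then expands $F(z)=\sum_n z^n b_n$ and $G(z)=\sum_n z^n c_n$ on the disc $B(0;r)\cap\mathbb C_I$, with coefficients $b_n,c_n\in\mathbb C_I$ and convergence uniform on compacta. Setting $a_n:=b_n+c_n J\in\mathbb H$ yields $f(q)=\sum_n q^n a_n$ for every $q\in B(0;r)\cap\mathbb C_I$.

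The main point to settle is that the coefficients $a_n$ do not depend on the chosen $I$ and that they equal $\frac{1}{n!}\frac{\partial^n f}{\partial x^n}(0)$. I would argue this by restricting to the real axis, which lies in every $\mathbb C_I$: for real $q=x\in(-r,r)$ the identity becomes $f(x)=\sum_n x^n a_n$, a genuine power series in the real variable $x$ with quaternionic coefficients, since $x$ commutes with every $a_n$. Differentiating $n$ times at the origin gives $\frac{\partial^n f}{\partial x^n}(0)=n!\,a_n$; as the left-hand side is intrinsic to $f$ and makes no reference to $I$, the coefficients are forced to coincide across all choices of $I$ and $J$. Finally, since $|q^n a_n|=|q|^n|a_n|$, convergence is governed solely by $|q|$: the convergence already established on each slice disc of radius $r$ forces $\limsup_n|a_n|^{1/n}\le 1/r$, and the root test then delivers absolute and uniform convergence of $\sum_n q^n a_n$ on compact subsets of $B(0;r)$. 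The chief obstacle is exactly this coordinate-free identification of the coefficients across different slices; once the real-axis restriction pins them down, the remaining estimates are routine.
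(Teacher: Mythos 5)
The paper offers no proof of this statement: it is quoted as a known preliminary result from the cited monographs on slice regular functions, so there is no in-paper argument to compare yours against line by line. Your proof is essentially the standard one from those sources --- Splitting Lemma, classical Taylor expansion of the holomorphic components $F$ and $G$ on a slice, reassembly of the coefficients as $a_n=b_n+c_nJ$ --- and the point you rightly single out as the crux, slice-independence of the $a_n$, is handled correctly by restricting to the real axis, which lies in every $\mathbb{C}_I$, and reading off $a_n=\frac{1}{n!}\frac{\partial^n f}{\partial x^n}(0)$.

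Two steps are glossed and deserve a sentence each, though neither is a genuine gap. First, in the direction ``series $\Rightarrow$ slice regular'', uniform convergence does not by itself license applying $\overline{\partial}_I$ term by term; the clean fix is already implicit in your converse: split $a_n=b_n+c_nJ$ with $b_n,c_n\in\mathbb{C}_I$, so the partial sums restricted to $\mathbb{C}_I$ are $F_N+G_NJ$ with $F_N,G_N$ holomorphic polynomials converging locally uniformly, and Weierstrass's convergence theorem gives holomorphic limits, whence $\overline{\partial}_I f_I=0$. Relatedly, the paper's definition of slice regularity presupposes $f$ real differentiable on $B(0;r)\subset\mathbb{R}^4$, not merely slicewise holomorphic; this also follows from your bound $\limsup_n|a_n|^{1/n}\le 1/r$, since the termwise real partial derivatives of $\sum_n q^n a_n$ then converge locally uniformly as well. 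Second, uniform convergence on all of $B(0;r)$, as the statement literally reads, is false in general ($\sum_n q^n$ on the unit ball is the standard counterexample); what your root-test argument actually delivers --- uniform convergence on compact subsets --- is the correct conclusion and is how the statement is meant to be read. With those two clarifications your proof is sound and matches the standard literature argument.
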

To state the next fundamental property of slice regular functions, we recall that a domain $U$ in $\mathbb H$ is called s-domain if $U\cap\mathbb C_I$ is connected for all $I\in\mathbb S$.
\begin{theorem}[Representation Formula]\label{Repr_formula} Let
$f$ be a slice regular function defined on an axially symmetric s-domain $U\subseteq  \mathbb{H}$. Let
$J\in \mathbb{S}$ and let $x\pm Jy\in U\cap\mathbb C_J$.  Then the following equality holds for all $q=x+Iy \in U$:
\begin{equation}\label{distribution_mon}
\begin{split}
f(x+Iy) &=\frac{1}{2}\Big[   f(x+Jy)+f(x-Jy)\Big]
+I\frac{1}{2}\Big[ J[f(x-Jy)-f(x+Jy)]\Big]\\
&= \frac{1}{2}(1-IJ) f(x+Jy)+\frac{1}{2}(1+IJ) f(x-Jy).
\end{split}
\end{equation}
\end{theorem}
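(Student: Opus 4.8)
The plan is to verify the formula by exhibiting its right-hand side as a slice regular function that agrees with $f$ on the slice $\mathbb C_J$, and then invoking uniqueness. Set
$$
g(x+Iy):=\tfrac12(1-IJ)f(x+Jy)+\tfrac12(1+IJ)f(x-Jy).
$$
Since $U$ is axially symmetric, $x+Iy\in U$ forces the whole sphere $[x+Jy]$, and in particular the points $x\pm Jy$, to lie in $U$, so $g$ is well defined on all of $U$. Distributing the factor $\tfrac12 IJ$ and writing $IJ[\,\cdot\,]=I\bigl(J[\,\cdot\,]\bigr)$ shows at once that the two displayed expressions coincide, so it suffices to prove $f=g$.

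First I would check that $g$ restricts to $f$ on $\mathbb C_J$: evaluating at $I=J$ and at $I=-J$ and using $J^2=-1$ collapses the right-hand side to $f(x+Jy)$ and to $f(x-Jy)$ respectively, so $g$ and $f$ agree on $U\cap\mathbb C_J$. Next I would show $g$ is slice regular. Fix $I\in\mathbb S$ and abbreviate $P(x,y)=f(x+Jy)$, $Q(x,y)=f(x-Jy)$. The holomorphy of the restriction $f_J$, i.e. $\overline\partial_J f_J=0$, translates into $\partial_y P=J\,\partial_x P$ and, after the reflection $y\mapsto -y$, into $\partial_y Q=-J\,\partial_x Q$. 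Applying $\overline\partial_I=\tfrac12(\partial_x+I\partial_y)$ to $g=\tfrac12(1-IJ)P+\tfrac12(1+IJ)Q$ and substituting these relations, one reduces everything to $\partial_x P$ and $\partial_x Q$; using $I^2=J^2=-1$ the coefficient of $\partial_x P$ becomes $(1-IJ)+I(1-IJ)J=0$ and that of $\partial_x Q$ becomes $(1+IJ)-I(1+IJ)J=0$, so $\overline\partial_I g=0$ on $U\cap\mathbb C_I$ for every $I$. Hence $g$ is slice regular on $U$.

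Finally, $f-g$ is slice regular on $U$ and vanishes on the connected slice $U\cap\mathbb C_J$ (connected because $U$ is an s-domain), so $f\equiv g$ by the identity principle for slice regular functions; this yields both displayed forms. I expect the noncommutative bookkeeping in the regularity check to be the main obstacle: the cancellations depend delicately on keeping $I$, $J$ and $IJ$ in the correct order and on using the \emph{two different} Cauchy--Riemann relations for $P$ and $Q$. Conceptually the whole statement is transparent from the power series theorem: on a ball centered at the origin $f(q)=\sum_n q^n a_n$ and $(x+Iy)^n=\alpha_n(x,y)+I\beta_n(x,y)$ with real $\alpha_n,\beta_n$ independent of $I$, whence $f(x+Iy)=\alpha(x,y)+I\beta(x,y)$ with $\alpha,\beta$ independent of $I$; solving $f(x\pm Jy)=\alpha\pm J\beta$ for $\alpha,\beta$ and resubstituting into $\alpha+I\beta$ reproduces exactly the two expressions above.
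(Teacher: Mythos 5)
Your proof is correct, and there is nothing in the paper to compare it against: Theorem \ref{Repr_formula} is stated in the preliminaries without proof, quoted from \cite{book_functional} and \cite{bookgss}. Your argument is in fact essentially the standard proof from those references: define $g(x+Iy)=\frac12(1-IJ)f(x+Jy)+\frac12(1+IJ)f(x-Jy)$, check it agrees with $f$ on $\mathbb{C}_J$ (take $I=\pm J$), verify slice regularity, and conclude by the identity principle. Your noncommutative bookkeeping checks out: from $\overline{\partial}_J f_J=0$ one indeed gets $\partial_y P=J\partial_x P$ and $\partial_y Q=-J\partial_x Q$, and the coefficients $(1-IJ)+I(1-IJ)J$ and $(1+IJ)-I(1+IJ)J$ both vanish because $I(IJ)J=I^2J^2=1$. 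Two small points would deserve a sentence in a polished write-up. First, $g$ is well defined as a function of the point $q$, i.e., invariant under the substitution $(I,y)\mapsto(-I,-y)$; this is immediate from the symmetry of the formula, but it is what legitimizes fixing $I$ and applying $\overline{\partial}_I$ on the whole slice $U\cap\mathbb{C}_I$, both signs of $y$ at once. Second, the identity principle you invoke requires $U$ to meet the real axis (the zero set of $f-g$ must accumulate inside every slice, and the slices are only linked through $\mathbb{R}$); this is automatic here, since axial symmetry puts $x+Iy$ and $x-Iy$ in the same connected slice $U_I$, forcing a crossing of $\mathbb{R}$ --- and in the cited books it is built into the definition of s-domain. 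Your closing power-series remark ($f(x+Iy)=\alpha(x,y)+I\beta(x,y)$ with $\alpha,\beta$ independent of $I$) is also sound and gives the cleanest conceptual picture, but as stated it only covers balls centered at the origin; the slice-by-slice argument you gave is the one that handles general axially symmetric s-domains.
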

This formula is important as it allows to reconstruct the values of a slice regular function when its values are known on a complex plane. In particular, we have
that if $J\in\mathbb S$, $U$ is an axially symmetric set, and $f:U\cap \mathbb C_J\to \mathbb H$ satisfies $\overline{\partial}_J f=0$ then the function
\begin{equation}\label{ext}
{\rm ext}(f)(x+Iy)=\frac{1}{2}\Big[   f(x+Jy)+f(x-Jy)\Big]
+I\frac{1}{2}\Big[ J[f(x-Jy)-f(x+Jy)]\Big]
\end{equation}
is the unique slice regular extension of $f$ to $U$.
\\
Given an open set $U_J\subseteq\mathbb C_J$ we can define the so-called axially symmetric completion as
$$
U=\bigcup_{I\in\mathbb S} \{x+Iy\ :\ x+Jy\in U_J\}.
$$
It is clear that the sum of two slice regular functions is slice regular. However, in general, the pointwise product of two slice regular functions is not slice regular. Bearing in mind the definition of product between two power series
$f(q)=\sum_{n=0}^\infty q^n a_n$, $g(q)=\sum_{n=0}^\infty q^n b_n$
with coefficients in a ring converging in $B(0;R)$ for some $R>0$ (in particular two polynomials) we define
$$
(f * g)(q)=\sum q^n c_n, \qquad c_n=\sum_{k=0}^n a_k b_{n-k}.
$$
This notion can be extended to functions $f,g$ slice regular on an axially symmetric s-domain, but we do not enter the details here and we refer the interested reader to \cite{book_functional,bookgss}. A notion more important for us will be the one of inverse of a function with respect to he $*$-multiplication. To this end, we have to introduce some more notations which are of independent interest limiting the definitions to the case of functions slice regular on a ball $B(0;R)$:
\begin{definition} Given the  function $f(q)=\sum_{n=0}^\infty q^n a_n$ slice regular on $B(0;R)$, we define its regular conjugate
$$
f^c(q)=\sum_{n=0}^\infty q^n \overline{a_n}
$$
and its symmetrization (or normal form)
$$
f^s(q)=(f*f^c)(q)=(f^c*f)(q).
$$
The inverse of $f$ with respect to the slice regular multiplication is denoted by $f^{-*}$ and is given by
$$
f^{-*}(q)=(f^s(q))^{-1} f^c(q).
$$
It is defined for $q\in B(0;R)$ such that $f^s(q)\not=0$.
\end{definition}
It is not true, in general, that the composition of two slice regular functions, when it is defined, is a slice regular function. However, it is true in a suitable subclass of slice regular functions defined below:
\begin{definition}
A function $f$  slice regular in an open set $U$ is called quaternionic intrinsic if
$$
f(U\cap \mathbb{C}_I)\subseteq  \mathbb{C}_I,\ \  \forall I\in \mathbb{S}.
$$
\end{definition}
If $U$ is an axially symmetric open set, it can be shown that a slice regular function $f$ is quaternionic intrinsic if and only if $f(q)=\overline{f(\bar q)}$ for all $q\in U$. This also justifies the terminology "intrinsic" that comes from the analogous property in the complex case. It is useful to note that a function slice regular on the ball  $B(0;R)$ with center at the origin and radius $R$ is quaternionic intrinsic if and only if its
power series expansion has real coefficients.
Moreover, we have
\begin{proposition}\label{comp}
Let $f$, $g$ be slice regular in the open sets $U'$, $U\subseteq\mathbb H$, respectively, $g(U)\subseteq U'$ and let $g$ be quaternionic intrinsic.
Then
$f(g(q))$ is slice regular in $U$.
\end{proposition}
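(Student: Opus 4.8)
The plan is to verify slice regularity of $h(q):=f(g(q))$ directly on each complex plane $\mathbb{C}_I$, exploiting the quaternionic intrinsic hypothesis on $g$ to confine everything inside $\mathbb{C}_I$ and then invoking the Splitting Lemma. Fix an imaginary unit $I\in\mathbb{S}$ and choose $J\in\mathbb{S}$ orthogonal to $I$. The crucial observation is that, since $g$ is quaternionic intrinsic, its restriction $g_I$ sends $U\cap\mathbb{C}_I$ into $\mathbb{C}_I$; combined with the slice regularity of $g$, this makes $g_I\colon U\cap\mathbb{C}_I\to\mathbb{C}_I$ a genuine holomorphic function of one complex variable (after the standard identification $x+Iy\leftrightarrow x+iy$), because $\overline{\partial}_I g_I=0$ and the target lies in $\mathbb{C}_I$. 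Moreover $g(U)\subseteq U'$ yields $g_I(U\cap\mathbb{C}_I)\subseteq U'\cap\mathbb{C}_I$, so all the compositions below are well defined.

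First I would apply the Splitting Lemma to $f$ on the plane $\mathbb{C}_I$, writing $f_I=F+GJ$ with $F,G\colon U'\cap\mathbb{C}_I\to\mathbb{C}_I$ holomorphic. Then, for $z\in U\cap\mathbb{C}_I$, the intrinsic property $g(z)\in\mathbb{C}_I$ lets me evaluate $f$ through its restriction $f_I$, so that
$$
h_I(z)=f(g(z))=f_I(g(z))=(F\circ g_I)(z)+(G\circ g_I)(z)\,J.
$$
Both $F\circ g_I$ and $G\circ g_I$ are compositions of holomorphic maps $\mathbb{C}_I\to\mathbb{C}_I$, hence holomorphic by the Wirtinger chain rule in one complex variable; in particular each is annihilated by $\overline{\partial}_I$.

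It then remains to record the (elementary) converse of the Splitting Lemma: if $h_I=P+QJ$ with $P,Q\colon U\cap\mathbb{C}_I\to\mathbb{C}_I$ holomorphic and $J$ a constant orthogonal to $I$, then
$$
\overline{\partial}_I h_I=(\overline{\partial}_I P)+(\overline{\partial}_I Q)J=0,
$$
where I have used that $I$ commutes with every element of $\mathbb{C}_I$ and that $J$ is constant, so that $\overline{\partial}_I$ passes through the right factor $J$. Applying this with $P=F\circ g_I$ and $Q=G\circ g_I$ gives $\overline{\partial}_I h=0$ on $U\cap\mathbb{C}_I$. Since $I\in\mathbb{S}$ was arbitrary, $h=f\circ g$ is slice regular on $U$.

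I do not anticipate a serious obstacle: the argument hinges on the single structural fact that intrinsicity keeps $g$ within each slice $\mathbb{C}_I$, reducing the claim to the classical stability of one-variable holomorphy under composition. The only point demanding minor care is the bookkeeping when $I$ is moved past the constant $J$ in the $\overline{\partial}_I$ computation; this works precisely because the coefficient functions $F$, $G$ (and hence $F\circ g_I$, $G\circ g_I$) take values in $\mathbb{C}_I$, where $I$ acts as a central imaginary unit.
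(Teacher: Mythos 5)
Your proof is correct. Note that the paper states Proposition~\ref{comp} as a recalled preliminary and gives no proof of it (it refers to \cite{book_functional,bookgss} for this background), so there is no in-paper argument to diverge from; yours is the standard one: intrinsicity gives $g_I(U\cap\mathbb C_I)\subseteq U'\cap\mathbb C_I$ with $g_I$ holomorphic, the Splitting Lemma reduces $f_I$ to two $\mathbb C_I$-valued holomorphic components, and classical stability of holomorphy under composition finishes each slice. One small simplification in your final bookkeeping: in $\overline{\partial}_I(QJ)=(\overline{\partial}_I Q)J$ nothing needs to be moved past $J$ at all, since $\overline{\partial}_I$ acts by differentiation and by \emph{left} multiplication by $I$, both of which pass through the constant \emph{right} factor $J$ automatically; the $\mathbb C_I$-valuedness of $F$ and $G$ (hence of $P=F\circ g_I$ and $Q=G\circ g_I$) is needed only to identify $\overline{\partial}_I$ with the classical Cauchy--Riemann operator, so that $\overline{\partial}_I P=\overline{\partial}_I Q=0$.
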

\section{Carleson measures in Hardy spaces}

Purpose of this section is to define the (slice) Carleson measures for $H^p(\mathbb B)$ and to prove their characterization. Let us first recall the definition of Hardy spaces $H^p(\mathbb B)$, see \cite{dGS}:
\begin{definition}
Let $p\in (0,+\infty)$. We define
$$
H^p(\mathbb B)=\{f:\, \mathbb B\to\mathbb H\ | \ f\ {\rm is\ slice\ regular\ and}\ \|f\|_p<+\infty\},
$$
where
$$
\| f\|_p =\sup_{I\in \mathbb S}\lim_{r\to 1^-} \left( \int_0^{2\pi} |f(re^{I\theta})|^p d\theta\right)^{1/p}.
$$
\end{definition}
In the sequel, we will also be in need of the following spaces:
\begin{definition}
Let $p\in (0,+\infty)$ and $\mathbb B_I=\mathbb B\cap\mathbb C_I$. We define
$$
H^p(\mathbb B_I)=\{f:\, \mathbb B\to\mathbb H\ | \ f\ {\rm is\ slice\ regular\ and}\ \|f_I\|_p<+\infty\},
$$
where
$$
\| f_I\|_p =\lim_{r\to 1^-} \left( \int_0^{2\pi} |f(re^{I\theta})|^p d\theta\right)^{1/p}.
$$
\end{definition}
\begin{remark}\label{Hpall}{\rm
It is clear that $H^p(\mathbb B)\subseteq H^p(\mathbb B_I)$ for any $p\in (0,+\infty)$. However, it can be proved that $f\in H^p(\mathbb B)$ if and only if $f\in H^p(\mathbb B_I)$ for some (and hence for all) $I\in\mathbb S$; this is a quite general phenomenon which, basically, follows from the Representation Formula, see \cite{CCGG,CGS4}, and \cite{dGS} for this specific case.}
\end{remark}

In the sequel we will also use the reproducing kernel for $H^2(\mathbb B)$ that was introduced in \cite{acs2}:
\begin{definition} The reproducing kernel of $H^2(\mathbb B)$ is the function
\begin{equation}\label{kernel}
k(q,w)=(1-2{\rm Re}(w) q+|w|^2q^2)^{-1}(1-qw)=(1-\bar q \bar w)
(1-2{\rm Re}(q) \bar w+|q|^2\bar w^2)^{-1}.
\end{equation}
\end{definition}
The kernel $k(q,w)$ is defined for all $q$ such that $1-2{\rm Re}(w) q+|w|^2q^2\not=0$ (or, equivalently, for all $w$ such that $1-2{\rm Re}(q) \bar w+|q|^2\bar w^2$).
It is the sum of the series $\sum_{n=0}^{+\infty} q^n\bar w^n$.
 Moreover:
\begin{itemize}
\item[a)] $k(q,w)$ is slice regular in $q$ and right slice regular in $\bar w$;
\item[b)] $\overline{k(q,w)}=k(w,q)$.
\end{itemize}
The function $k(q,w)$ is such that $k(q,w)*(1-q\bar w)=(1-q\bar w)*k(q,w)=1$, as it can be easily verified
and thus we can write $k(q,w)=(1-q\bar w)^{-*}$ where the $*$-inverse is computed with respect to the variable $q$
(but note that $k(q,w)$ also equals the right $*$-inverse of $(1-q\bar w)$ in the variable $\bar w$).

\begin{definition}
A finite, positive, Borel measure $\mu$ on the unit ball $\mathbb B\subset\mathbb H$ is said to be
 a {\it Carleson measure for $H^p(\mathbb B)$} if for any $f\in H^p(\mathbb B)$
\begin{equation} \int_{\mathbb B}|f(q)|^p\,d\mu(q)\leq C \parallel f\parallel^p_{H^p(\mathbb B)}\lesssim \parallel f\parallel^p_{H^p(\mathbb B)}\,,
\end{equation}
the constant $C$ in the estimate depending only on $\mu$.
\end{definition}
Any measure $\mu$ on $\mathbb B$ can be decomposed as $\mu=\mu_{\mathbb R}+\tilde\mu$ where ${\rm supp}(\mu_{\mathbb R})\subseteq\mathbb B\cap\mathbb R$ and $\tilde\mu (\mathbb B\cap\mathbb R)=0$. Obviously, a measure $\mu$ is a Carleson measure if and only if $\mu_{\mathbb R}$ and $\tilde\mu$ are Carleson.\\
The Disintegration Theorem implies that any finite measure $\mu$ on $\mathbb B$ such that $\mu (\mathbb B\cap\mathbb R)=0$ can be decomposed as
$$
\mu (x+Iy)= \mu^+_I(x+Iy)\, \nu( I)
$$
where $\mu^+_I$ is a probability measure on
$$
\mathbb B_I^+=\{x+Iy\in\mathbb B\ :\ y \geq 0\}
$$
and $\nu$ is the measure on the Borel sets $E$ of the sphere $\mathbb S$ defined by
$$
\nu(E)=\mu\{x+Iy\in\mathbb B\ :\ y>0,\  I\in E\}.
$$
With this notation, for any $f\in L^1(\mathbb B, d\mu)$ we have (see \cite{AS}, \cite{GGJ}):
\begin{equation}\label{misuremunu}
\int_{\mathbb B} f(x+Iy)\, d\mu(x+Iy) = \int_{\mathbb S} \int_{\mathbb B_I^+} f(x+Iy)\, d\mu_I^+(x+Iy)\, d\nu(I).
\end{equation}
Thus, if $\mu$ is a finite measure on $\mathbb B$ written in the above form $\mu=\mu_{\mathbb R}+\tilde\mu$,
applying the Disintegration Theorem to $\tilde\mu$ we can write
\begin{equation}\label{misure}
\int_{\mathbb B}f(x+Iy)\,d\mu(x+yI)\, =\, \int_{\mathbb B\cap\mathbb R} f(x)\,d\mu_{\mathbb R}(x) +\int_{\mathbb S}\int_{\mathbb B_I^+}f(x+yI)\,d\tilde{\mu}_I^+(x+yI)\, d\nu(I).
\end{equation}

We can introduce, for any $I\in\mathbb S$, the measure $\mu_I=\mu_{\mathbb R}+\tilde\mu_I^++\tilde\mu_{-I}^+$ which is the restriction of the measure $\mu$ to $\mathbb B_I$, and
we can write the definition of slice Carleson measure:
\begin{definition}
  A finite, positive, Borel measure $\mu$ is said to be a {\it slice Carleson measure for $H^p(\mathbb B)$} if for any $f\in H^p(\mathbb B)$ and any $I\in\mathbb S$
\begin{equation}\label{9}
\int_{\mathbb B_I}|f(x+yI)|^p\,d\mu_I(x+yI)\, \leq C \parallel f\parallel^p_{H^p(\mathbb B)}\,\lesssim\, \parallel f\parallel^p_{H^p(\mathbb B)}\,,
\end{equation}
the constant $C$ in the estimate depending only on $\mu$.
\end{definition}
We can rewrite the left hand side of \eqref{9} obtaining the condition:
\begin{equation}\label{eq9}
\begin{split}
\int_{\mathbb B_I}|f(x+yI)|^p\,d\mu_I(x+yI)\, &=\, \int_{\mathbb B\cap\mathbb R} |f(x)|^p\,d\mu_{\mathbb R}(x)+\int_{\mathbb B_I^+}|f(x+yI)|^p\,d\tilde{\mu}_I^+(x+yI)\\
&+\int_{\mathbb B_{-I}^+}|f(x+y(-I))|^p\,d\tilde{\mu}_{-I}^+(x+y(-I))\\
&\lesssim\, \parallel f\parallel^p_{H^p(\mathbb B)}\, .
\end{split}
\end{equation}
 We now define the analog of the Carleson box (or Carleson square, see \cite{DS}) in this framework, see \cite{AS}.
\begin{definition}
Let $q=re^{J\theta_0}$ be an element in $\mathbb B$ and let $A_I(\theta_0,r)$ be the arc of $\partial\mathbb B_I$ defined by
\[
A_I(\theta_0,r)=\{e^{I\theta}\in\partial\mathbb B_I\ :\ |\theta -\theta_0|\leq 1-r\},
\]
and let $S_I(\theta_0,r)$ be the Carleson box in the plane $\mathbb C_I$ defined by
\[
S_I(\theta_0,r)=\{\rho e^{I\theta}\in\mathbb B_I\ :\ e^{I\theta}\in A_I(\theta_0,r), \, r \leq \rho < 1\}.
\]
We say that the set
\[
S(\theta_0,r)=\underset{I\in\mathbb S}{\bigcup} S_I(\theta_0,r)
\]
is a symmetric box.
\end{definition}
\begin{remark}{\rm
For any $q=re^{I\theta_0}$, the length of the arc $A_I(\theta_0,r)$, denoted by $|A_I(\theta_0,r)|$, is $2(1-r)$ and it is independent of $I\in\mathbb S$.}
\end{remark}
The relation between slice Carleson measures and Carleson measures for $H^2(\mathbb B)$ given in Proposition 3.1 in \cite{AS} immediately extends to the spaces $H^p(\mathbb B)$:
\begin{proposition}\label{sliceCarleson} If a measure $\mu$ is slice Carleson for the Hardy space $H^p(\mathbb B)$, then it is also Carleson.
\end{proposition}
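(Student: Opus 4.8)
The plan is to derive the Carleson bound directly from the disintegration formula, exploiting the fact that the constant appearing in the slice Carleson condition \eqref{9} is uniform in $I\in\mathbb S$. First I would start from the left-hand side of the Carleson condition and apply the Disintegration Theorem in the form \eqref{misure}, now to the function $|f|^p$, obtaining
$$
\int_{\mathbb B}|f(q)|^p\,d\mu(q) = \int_{\mathbb B\cap\mathbb R}|f(x)|^p\,d\mu_{\mathbb R}(x) + \int_{\mathbb S}\left(\int_{\mathbb B_I^+}|f(x+yI)|^p\,d\tilde\mu_I^+(x+yI)\right)d\nu(I).
$$

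Next I would observe that the slice Carleson condition \eqref{eq9} writes $\int_{\mathbb B_I}|f|^p\,d\mu_I$ as a sum of three \emph{nonnegative} integrals, the whole of which is bounded by $C\|f\|_{H^p(\mathbb B)}^p$ with $C$ depending only on $\mu$ and in particular independent of $I$. Since each summand is nonnegative, each is individually dominated by the same bound; in particular $\int_{\mathbb B\cap\mathbb R}|f(x)|^p\,d\mu_{\mathbb R}(x)\le C\|f\|_{H^p(\mathbb B)}^p$ and, for every $I\in\mathbb S$, $\int_{\mathbb B_I^+}|f(x+yI)|^p\,d\tilde\mu_I^+(x+yI)\le C\|f\|_{H^p(\mathbb B)}^p$. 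Substituting these two estimates into the disintegrated expression and using the uniformity of $C$ in $I$ to pull the constant out of the integral over $\mathbb S$, I would arrive at
$$
\int_{\mathbb B}|f(q)|^p\,d\mu(q) \le C\|f\|_{H^p(\mathbb B)}^p + C\|f\|_{H^p(\mathbb B)}^p\,\nu(\mathbb S) = C\bigl(1+\nu(\mathbb S)\bigr)\|f\|_{H^p(\mathbb B)}^p.
$$
Since $\mu$ is finite, $\nu(\mathbb S)=\tilde\mu(\mathbb B)<+\infty$, so $C(1+\nu(\mathbb S))$ is a finite constant depending only on $\mu$, which is exactly the Carleson bound.

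I do not expect a genuine obstacle here, as this is the easy implication and the argument is essentially a Fubini-type estimate against $d\nu$. The only points requiring care are that the slice Carleson constant is uniform in $I$ (which is precisely how the definition is phrased, so that it survives the integration over $\mathbb S$) and that the finiteness of $\mu$ guarantees $\nu(\mathbb S)<+\infty$, preventing the $\mathbb S$-integration from destroying the bound. The substantive direction would be the converse, which the proposition does not assert.
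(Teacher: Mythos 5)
Your proposal is correct and follows essentially the same route as the paper's proof: disintegrate $\int_{\mathbb B}|f|^p\,d\mu$ via \eqref{misure}, bound the real-axis term and each slice term $\int_{\mathbb B_I^+}|f|^p\,d\tilde\mu_I^+$ by the slice Carleson estimate with its $I$-uniform constant, and integrate over $\mathbb S$ using $\nu(\mathbb S)<+\infty$. The paper states these steps more tersely (absorbing the factor $1+\nu(\mathbb S)$ into the symbol $\lesssim$), while you make explicit the nonnegativity of the three summands in \eqref{eq9} and the finiteness of $\nu(\mathbb S)$, which are exactly the points the paper leaves implicit.
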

\begin{proof} Let $\mu$ be a slice Carleson measure for $H^p(\mathbb B)$, so that it is also finite by assumption. Moreover, using \eqref{misure}, we obtain that for any $f\in H^p(\mathbb B)$,
\begin{eqnarray*}
\int_{\mathbb B}|f(q)|^p\,d\mu(q)\ &=& \ \int_{\mathbb B\cap\mathbb R} |f(x)|^p\,d\mu_{\mathbb R}(x)\, +\, \int_{\mathbb S}\,d\nu(I)\, \int_{\mathbb B_I^+} |f(z)|^p\,d\tilde{\mu}^+_I(z)\\
&\lesssim&\ \parallel f\parallel^p_{H^p(\mathbb B)}\, +\, \int_{\mathbb S}\,d\nu(I)\, \parallel f\parallel^p_{H^p(\mathbb B)}\ \lesssim \ \parallel f\parallel^p_{H^p(\mathbb B)}.
\end{eqnarray*}
\end{proof}
Next result is the generalization of Proposition 3.3 in \cite{AS} to the present setting.
\begin{proposition}\label{sC} A finite, positive, Borel measure $\mu$ is a slice Carleson measure for $H^p(\mathbb B)$ if and only if for all $I\in\mathbb S$, and for all $z=re^{I\theta_0}\in\mathbb B_I$
\begin{equation}\label{sCeq}
\mu_I(S_I(r,\theta_0))\ \lesssim \ |A_I(r,\theta_0)|.
\end{equation}
\end{proposition}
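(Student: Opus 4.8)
The plan is to reduce the statement, slice by slice, to the classical Carleson embedding theorem for the \emph{scalar} Hardy space $H^p(\mathbb D)$. The bridge is provided by the Splitting Lemma together with the norm equivalence of Remark \ref{Hpall}: since $f\in H^p(\mathbb B)$ if and only if $f\in H^p(\mathbb B_I)$ for each $I$, with $\|f\|_{H^p(\mathbb B)}$ comparable to $\|f_I\|_p$ (a consequence of the Representation Formula, Theorem \ref{Repr_formula}), the inequality \eqref{9} on the slice $\mathbb B_I$ is, up to constants, an embedding inequality for the measure $\mu_I$ on the disc $\mathbb B_I\cong\mathbb D$. I deliberately avoid the usual complex trick of passing from $p=2$ to general $p$ by raising to the $p/2$-th power, since those powers of a slice regular function are not slice regular; instead I keep $p$ fixed and work with the scalar components $F,G$ furnished by the Splitting Lemma, for which the classical $H^p(\mathbb D)$ theory is available for every $p$.

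For necessity I would feed into \eqref{9} the test functions that live on a single slice. Given $I$ and a holomorphic $F:\mathbb B_I\to\mathbb C_I$, its slice regular extension $f={\rm ext}(F)$ from \eqref{ext} satisfies $f_I=F$ and, by the Representation Formula, $\|f\|_{H^p(\mathbb B)}\lesssim\|F\|_p$. Plugging $f$ into the slice Carleson inequality yields $\int_{\mathbb B_I}|F|^p\,d\mu_I\lesssim\|F\|_{H^p(\mathbb D)}^p$ for every $F\in H^p(\mathbb D)$, which says precisely that $\mu_I$ is a classical Carleson measure for $H^p(\mathbb D)$ on the disc $\mathbb B_I$. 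The necessity part of the classical Carleson theorem (see \cite{carleson}) then gives $\mu_I(S_I(\theta_0,r))\lesssim 1-r$, and since $|A_I(\theta_0,r)|=2(1-r)$ this is exactly \eqref{sCeq}.

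For sufficiency, fix $f\in H^p(\mathbb B)$ and $I\in\mathbb S$, and split $f_I=F+GJ$ with $F,G:\mathbb B_I\to\mathbb C_I$ holomorphic, so that $|f_I|^p=(|F|^2+|G|^2)^{p/2}$. The box hypothesis \eqref{sCeq} says that $\mu_I$ satisfies the classical Carleson box condition on $\mathbb B_I$, so by the converse direction of the classical Carleson theorem $\int_{\mathbb B_I}|F|^p\,d\mu_I\lesssim\|F\|_p^p$ and likewise for $G$, with constants uniform in $I$. Using the elementary inequality $(a^2+b^2)^{p/2}\lesssim a^p+b^p$ valid for all $p>0$, I then estimate
\begin{equation*}
\int_{\mathbb B_I}|f_I|^p\,d\mu_I\lesssim\int_{\mathbb B_I}|F|^p\,d\mu_I+\int_{\mathbb B_I}|G|^p\,d\mu_I\lesssim\|F\|_p^p+\|G\|_p^p .
\end{equation*}
Finally, $|F|^p,|G|^p\le(|F|^2+|G|^2)^{p/2}=|f_I|^p$ on the boundary gives $\|F\|_p^p+\|G\|_p^p\lesssim\|f_I\|_p^p\approx\|f\|_{H^p(\mathbb B)}^p$, which is \eqref{9}.

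The main obstacle is precisely the $\mathbb H$-valued nature of $f$ on each slice: whereas in the complex case the full statement reduces to a single scalar inequality, here one must pass through the two scalar components $F,G$, and the comparisons $(|F|^2+|G|^2)^{p/2}\simeq|F|^p+|G|^p$ must be controlled on both the measure side and the boundary side, uniformly in $I$. The role of Remark \ref{Hpall} (and hence of the Representation Formula) is essential in keeping the constants independent of $I$, so that the slice-wise classical Carleson estimates assemble into a genuine slice Carleson bound.
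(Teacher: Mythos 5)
Your proof is correct and takes essentially the same route as the paper: necessity by testing the slice Carleson inequality against slice-preserving holomorphic functions on $\mathbb B_I\cong\mathbb D$ (the paper invokes Remark \ref{Hpall} where you explicitly build $\mathrm{ext}(F)$, which amounts to the same thing) and then citing the classical Carleson theorem, and sufficiency via the Splitting Lemma $f_I=F+GJ$ together with $(|F|^2+|G|^2)^{p/2}\lesssim |F|^p+|G|^p$ and the pointwise bounds $|F|,|G|\le |f_I|$ giving $\|F\|_p^p+\|G\|_p^p\lesssim\|f\|_{H^p(\mathbb B)}^p$. Your remark on the uniformity of constants in $I$ is also consistent with the paper's (implicit) use of the fact that the classical embedding constant depends only on the box constant and $p$.
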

\begin{proof}
 Assume that $\mu$ is slice Carleson. Then, in view of Remark \ref{Hpall}, (\ref{9}) holds for all $f\in H^p(\mathbb B_I)$, then it holds, in particular, also for any $F\in  H^p(\mathbb B_I)$ that maps $\mathbb B_I$ into itself. This means that $\mu_I$ is a Carleson measure for complex Hardy space $H^p(\mathbb D)\subset H^p(\mathbb B_I)$. The classical characterization theorem for Carleson measures of complex Hardy spaces (-from now on {\it classical Carleson theorem} - see e.g. \cite{D}) gives us the statement.

 Conversely, let $\mu$ be such that (\ref{sCeq}) holds. Then, for the classical Carleson theorem, $\mu_I$ is a Carleson measure for $H^p(\mathbb D)\subset H^p(\mathbb B_I)$. According to the Splitting Lemma, any $f\in H^p(\mathbb B)$ restricted to $\mathbb B_I$ decomposes as $f(z)=F(z)+G(z)J$, with $J\in\mathbb S$, $J\perp I$, and $F$, $G$ holomorphic on $\mathbb B_I$. Thus
\begin{eqnarray*}
\int_{\mathbb B_I}|f(z)|^p\,d\mu_I(z)\, &=& \, \int_{\mathbb B_I} \left(|F(z)|^2+|G(z)|^2\right)^{p/2}\,d\mu_I(z)\\
&\leq& \, \int_{\mathbb B_I}\left(2\max\left(|F(z)|^2,|G(z)|^2\right)\right)^{p/2}\,d\mu_I(z)\\
&=&\, \int_{\mathbb B_I}2^{p/2}\max\left(|F(z)|^p,|G(z)|^p\right)\,d\mu_I(z)\\
&\leq& 2^{p/2}\left(\int_{\mathbb B_I} |F(z)|^p\,d\mu_I(z)+\int_{\mathbb B_I} |G(z)|^p\,d\mu_I(z)\right)\\
&\lesssim&\, \parallel F\parallel_{H^p(\mathbb B_I)}^p+\parallel G\parallel_{H^p(\mathbb B_I)}^p\, \leq\, 2\parallel f\parallel_{H^p(\mathbb B)}^p
\end{eqnarray*}
which means that $\mu$ is slice Carleson.
\end{proof}

\begin{theorem}\label{thmequiv} A finite, positive, Borel measure $\mu$ is a Carleson measure for $H^p(\mathbb B)$ if and only if for every $ q=re^{J\theta_0}\in\mathbb B$
\begin{equation}\label{sCeq2}
\mu (S(\theta_0,r))\ \lesssim \ |A_I(\theta_0,r)| \,,
\end{equation}
$|A_I(\theta_0,r)|$ being the lenght of the arc $A_I(\theta_0,r)$, and $I\in\mathbb S$.
\end{theorem}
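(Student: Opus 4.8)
The plan is to prove the two implications separately, reducing each to the \emph{classical Carleson theorem} on a single complex plane $\mathbb{C}_J$ by means of the Representation Formula. Throughout I fix an arbitrary $J\in\mathbb{S}$ and recall that, by Remark \ref{Hpall}, the norm $\|f\|_{H^p(\mathbb{B})}$ is comparable to $\|f_J\|_{H^p(\mathbb{B}_J)}$, so it suffices to work on the slice $\mathbb{B}_J$.

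For the sufficiency (the box condition implies that $\mu$ is Carleson), I would first turn the Representation Formula \eqref{distribution_mon} into the pointwise estimate $|f(x+Iy)|\le |f(x+Jy)|+|f(x-Jy)|$, valid for every $I\in\mathbb{S}$ since $\tfrac12|1\pm IJ|\le 1$; raising to the $p$-th power gives $|f(x+Iy)|^p\lesssim |f(x+Jy)|^p+|f(x-Jy)|^p$. Inserting this into the disintegration \eqref{misure} collapses all the slice contributions onto $\mathbb{C}_J$: the integrand on the right no longer depends on $I$, so pushing each $\tilde\mu_I^+$ forward to the coordinates $(x,y)$ and integrating in $\nu$ produces a single finite measure $\Lambda_J$ on $\mathbb{B}_J$ (the sum of $\mu_{\mathbb{R}}$ with the push-forwards of $\int_{\mathbb{S}}\tilde\mu_I^+\,d\nu$ onto the upper and lower halves of $\mathbb{B}_J$) for which $\int_{\mathbb{B}}|f|^p\,d\mu\lesssim \int_{\mathbb{B}_J}|f|^p\,d\Lambda_J$. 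The crucial bookkeeping step is that, because $S(\theta_0,r)$ is axially symmetric, the preimage under this collapse of the complex Carleson box $S_J(\theta_0,r)$ fills out $S(\theta_0,r)$, whence $\Lambda_J(S_J(\theta_0,r))\lesssim\mu(S(\theta_0,r))\lesssim |A_J(\theta_0,r)|$. Thus $\Lambda_J$ satisfies the classical box condition, so by the classical Carleson theorem together with the Splitting Lemma (exactly as in the proof of Proposition \ref{sC}) it is a Carleson measure for $H^p(\mathbb{B}_J)$, and therefore $\int_{\mathbb{B}}|f|^p\,d\mu\lesssim\|f\|_{H^p(\mathbb{B})}^p$.

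For the necessity I would test the Carleson inequality on an explicit quaternionic intrinsic function built from the kernel. Writing $w=re^{J\theta_0}$, set $g_w(q)=(1-2\mathrm{Re}(w)q+|w|^2q^2)^{-1}$, i.e. the intrinsic left factor of $k(q,w)$ in \eqref{kernel}; since its symmetrized denominator has real coefficients in $q$, $g_w$ is slice regular on $\mathbb{B}$ and quaternionic intrinsic, so $|g_w|$ is constant on each sphere $[q]$. On $\mathbb{C}_J$ one has $g_w(z)=\bigl((1-zw)(1-z\bar w)\bigr)^{-1}$, whence $|g_w(q)|\gtrsim (1-r)^{-1}$ on the whole symmetric box $S(\theta_0,r)$ and, as $\rho\to1^-$, $\int_0^{2\pi}|g_w(\rho e^{J\theta})|^{s}\,d\theta\approx (1-r)^{1-s}$ for $s>1$. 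To make the exponents match for \emph{every} $p\in(0,\infty)$ — and to avoid taking fractional powers, which need not be slice regular — I would use the integer power $g_w^{m}$ with $m>1/p$; this is again intrinsic and slice regular, satisfies $|g_w^m|\gtrsim (1-r)^{-m}$ on $S(\theta_0,r)$ and $\|g_w^m\|_{H^p(\mathbb{B})}^p\approx(1-r)^{1-mp}$. Feeding $g_w^m$ into the Carleson inequality yields $(1-r)^{-mp}\mu(S(\theta_0,r))\lesssim (1-r)^{1-mp}$, that is $\mu(S(\theta_0,r))\lesssim (1-r)=\tfrac12|A_I(\theta_0,r)|$.

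The main obstacle is the uniform control across the whole sphere $[q]$ in both directions: a priori the size of $f$ or of a test function varies with the imaginary unit, and the symmetric box condition is only an average over $\mathbb{S}$ (with respect to $\nu$) of the slicewise quantities, so it is genuinely weaker than slice Carleson and cannot be routed through Propositions \ref{sliceCarleson} and \ref{sC}. This is overcome in the sufficiency direction by the Representation Formula, which bounds every slice uniformly by the reference plane $\mathbb{C}_J$ and thereby legitimises the collapse to the single measure $\Lambda_J$; and in the necessity direction by choosing an \emph{intrinsic} test function, whose modulus is literally constant on spheres. The remaining care is measure-theoretic: justifying the push-forwards and Fubini in the disintegration, and treating the boxes that meet the real axis (when $\theta_0$ is near $0$ or $\pi$), where the term $\mu_{\mathbb{R}}$ must be accounted for separately — but in all these cases $\mu_{\mathbb{R}}(S\cap\mathbb{R})\le\mu(S(\theta_0,r))$, so the hypothesis still applies.
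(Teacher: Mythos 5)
Your sufficiency half is, in substance, the paper's own proof: the pointwise bound $|f(x+Iy)|^p\lesssim |f(x+Jy)|^p+|f(x-Jy)|^p$ from the Representation Formula (the paper's \eqref{disug_p}--\eqref{disug_p1}), the disintegration \eqref{misure}, and the collapse of all slice measures onto a fixed plane --- your $\Lambda_J$ is exactly the paper's measure $\int_{\mathbb S}d\tilde\mu_I^{+\mathrm{proj}}\,d\nu(I)$, augmented by $\mu_{\mathbb R}$ --- followed by the classical Carleson theorem via the Splitting Lemma as in Proposition \ref{sC}. That direction is correct and needs no further comment.

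The necessity half, however, has a genuine quantitative gap. Your test function $g_w(q)=(1-2\mathrm{Re}(w)q+|w|^2q^2)^{-1}$ is indeed intrinsic, and $|g_w^m|\gtrsim (1-r)^{-m}$ on $S(\theta_0,r)$ is correct; but the claimed norm estimate $\int_0^{2\pi}|g_w(\rho e^{J\theta})|^{s}\,d\theta\approx(1-r)^{1-s}$ is \emph{not} uniform in $\theta_0$: on $\mathbb C_J$ the function $g_w(z)=\bigl((1-zw)(1-z\bar w)\bigr)^{-1}$ has two singular points at $1/w$ and $1/\bar w$, and these collide as $w$ approaches the real axis. In the extreme case $w=r\in\mathbb R$ one has $g_w(z)=(1-zr)^{-2}$, so $\|g_w^m\|_{H^p}^p\approx(1-r)^{1-2mp}$, and your chain of inequalities only gives $\mu(S(0,r))\lesssim(1-r)^{1-mp}$, which diverges as $r\to1^-$ since $mp>1$. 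As \eqref{sCeq2} must be verified for \emph{all} boxes --- including those centered on or near the real axis, which are precisely the ones controlling $\mu_{\mathbb R}$ --- the argument as written fails. The paper avoids this by averaging the Hardy kernel \eqref{kernel} over $\mathbb S$, which produces the \emph{sum} $K=\frac12\left(k_w+k_{\bar w}\right)$ rather than your product: by the triangle inequality $\|K\|_2^2\lesssim(1-|w|)^{-1}$ uniformly in $\theta_0$ (each summand has $H^2$-norm squared $(1-|w|^2)^{-1}$ regardless of the argument of $w$), while on $S(w)$ both summands have positive real part, so they reinforce rather than cancel and $|K|\gtrsim(1-r)^{-1}$ there; fractional powers $K^{2/p}$, legitimized by $q^\nu=\mathrm{ext}(z^\nu)$ and Proposition \ref{comp}, then handle all $p$. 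Your construction can be repaired, but only by sharpening \emph{both} of your estimates to the two-scale versions: with $\delta=1-r$ and $t=|w-\bar w|$ one has $\int_0^{2\pi}|g_w(\rho e^{J\theta})|^{s}\,d\theta\approx\delta^{1-s}(\delta+t)^{-s}$ for $s>1$ and $|g_w|\gtrsim\delta^{-1}(\delta+t)^{-1}$ on $S(\theta_0,r)$ (since $|1-zw|\le|1-z\bar w|+|z|\,|w-\bar w|\lesssim\delta+t$ there), whereupon the factors $(\delta+t)^{-mp}$ cancel and $\mu(S(\theta_0,r))\lesssim\delta$ follows; your weaker lower bound $|g_w|\gtrsim\delta^{-1}$ alone cannot close the gap. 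With that correction, your integer-power device $g_w^m$, $m>1/p$, is a nice alternative to the paper's fractional powers, since it avoids extending $z^{2/p}$ altogether.
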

\begin{proof} The condition of being Carleson and condition \eqref{sCeq2} are both additive, hence we split any measure $\mu$ as the sum of two measures  $\mu=\mu_{\mathbb R}+\tilde{\mu}$ with $\mathrm{supp}\,\mu_{\mathbb R}\subset\mathbb B\cap\mathbb R$ and $\tilde{\mu}(\mathbb B\cap\mathbb R)=0$, and prove the theorem for $\mu_{\mathbb R}$ and $\tilde{\mu}$.\vspace{0.3cm}

Let $\mu_{\mathbb R}$ be a measure with support in $\mathbb B\cap\mathbb R$. Hence $\mu_{\mathbb R}$ is Carleson if and only if it is slice Carleson. Moreover $\mu_{\mathbb R}(S_I(\theta_0,r))=\mu_{\mathbb R}(S(\theta_0,r))$ for any imaginary unit $I$. Hence Proposition \ref{sC} implies the thesis.\vspace{0.3cm}

Let $\tilde{\mu}$ be a measure such that $\tilde{\mu}(\mathbb B\cap\mathbb R)=0$.

 First, we suppose that $\tilde{\mu}$ is a measure such that any symmetric box $S(\theta_0,r)=S(q)$ if $q=re^{I\theta_0}$ has measure controlled by $|A_I(q)|$. Thus, for any $w\in\mathbb B$

\begin{eqnarray*} |A_I(w)|\ &\gtrsim& \tilde{\mu}(S(w)) = \int_{S(w)}d\tilde{\mu}(q)= \int_{\mathbb S}\left(\int_{S_I(w)}d\tilde{\mu}_I^+(z)\right) d\nu(I) \\
&=& \int_{\mathbb S}\left(\int_{S_{J_0}(x+yJ_0)}d\tilde\mu^{+\emph{proj}}_{I}(x+yJ_0)\right) d\nu(I)
\end{eqnarray*}
where $J_0$ is any fixed imaginary unit, $S_{J_0}(w)$ is the projection of the symmetric box $S(w)$ on the fixed $\mathbb B^+_{J_0}$ and $\tilde{\mu}_I^{+\emph{proj}}$ is the projection of the measure $\tilde{\mu}^+_I$ on the same slice:
\begin{eqnarray*}\tilde{\mu}_I^{+\emph{proj}}(E)\ =\ \tilde{\mu}_I^+(\{x+yI\ |\ y>0, \ x+yJ_0\in E\})\\
d\tilde{\mu}_I^{+\emph{proj}}(x+yJ_0)\ =\ d\tilde{\mu}^+_I(x+yI),
\end{eqnarray*}
for any $E\subset\mathbb B^+_{J_0}$. Then, the measure

$$\int_{\mathbb S}d\tilde\mu^{+\emph{proj}}_{I}(x+yJ_0) d\nu(I)$$
is Carleson for $H^p(\mathbb B_{J_0})$. Now, let $f\in H^p(\mathbb B)$. Using the Representation Formula, if $I$ denotes the imaginary unit of $q$, and $J\neq\pm I$ is any imaginary unit, we have

\begin{eqnarray*}
\int_{\mathbb B} &|f(q)|^p&\, d\tilde{\mu}(q) =\\
&=& \int_{\mathbb S}\int_{\mathbb B_I^+}\left|\frac{1-IJ}{2}f(x+yJ)+\frac{1+IJ}2f(x-yJ)\right|^p\,d\tilde{\mu}^+_I(x+yI)\,d\nu(I)\\
&\lesssim& \int_{\mathbb S}\int_{\mathbb B_I^+}\left(|f(x+yJ)|^p+|f(x-yJ)|^p\right)\,d\tilde{\mu}^+_I(x+yI)\,d\nu(I)\\
\end{eqnarray*}
where we have used the fact that the map $x\mapsto x^p$ if convex if $p\geq 1$ and so
\begin{equation}\label{disug_p}
\left|\frac{1-IJ}{2}f(x+yJ)+\frac{1+IJ}2f(x-yJ)\right|^p\leq {2^{p-1}}\left(\left|f(x+yJ)\right|^p+\left|f(x-yJ)\right|^p\right)
\end{equation}
while if $0<p<1$ the map $x\mapsto x^p$ is subadditive on the positive real axis, thus
\begin{equation}\label{disug_p1}
\left|\frac{1-IJ}{2}f(x+yJ)+\frac{1+IJ}2f(x-yJ)\right|^p\leq \left(\left|f(x+yJ)\right|^p+\left|f(x-yJ)\right|^p\right).
\end{equation}
Then we have
\begin{eqnarray*}
\int_{\mathbb B} |f(q)|^p\, d\tilde{\mu}(q) &\lesssim &\int_{\mathbb S}\int_{\mathbb B_I^+}\left(|f(x+yJ)|^p+|f(x-yJ)|^p\right)\,d\tilde{\mu}_I^+(x+yI)\,d\nu(I)\\
&=& \int_{\mathbb B_J^+}|f(x+yJ)|^p\int_{\mathbb S}\,d\tilde{\mu}_I^{+\emph{proj}}(x+yJ)\,d\nu(I) +\\
&+& \int_{\mathbb B_{-J}^+}|f(x-yJ)|^p\int_{\mathbb S}\,d\tilde{\mu}_I^{+\emph{proj}}(x-yJ)\,d\nu(I)\\
&=& \int_{\mathbb B_J^+}|f(x+yJ)|^p\int_{\mathbb S}\,d\tilde{\mu}_I^{+\emph{proj}}(x+yJ)\,d\nu(I) +\\
&+& \int_{\mathbb B_{J}^+}|f(x+yJ)|^p\int_{\mathbb S}\,d\tilde{\mu}_I^{+\emph{proj}}(x+yJ)\,d\nu(I)\\
&=& 2\parallel f\parallel^p_{H^p(\mathbb B_I)}\\
\end{eqnarray*}
and this concludes the proof.
Conversely, suppose $\tilde{\mu}$ is Carleson for $H^p(\mathbb B)$. Consider the function
 $$
 K(q)\ =\ \frac1{4\pi}\,\int_{\mathbb S} k_{u+vI}(q)\,dA(I)
 $$
where $k_{u+vI}(q)=k_w(q)=k(q,w)=(1-q\overline{w})^{-*}$ is the reproducing kernel for $H^2(\mathbb B)$ (see \eqref{kernel}) and $dA(I)$ is the area element on $\mathbb S$. Then, using that $k_w(q)=\overline{k_q(w)}$ for any $w,q\in\mathbb B$, the Representation Formula and the fact that
$\int_{\mathbb S} I \,dA(I)=0$, we have
 \begin{eqnarray*}
K(q) &=&\frac1{4\pi}\,\int_{\mathbb S} k_{u+vI}(q)\,dA(I) = \frac1{4\pi}\,\int_{\mathbb S} \overline{k_{q}(u+vI)}\,dA(I)\\
 &=& \frac1{4\pi}\,\int_{\mathbb S} \overline{\left(\frac{1-IJ}2 k_{q}(u+vJ) +\frac{1+IJ}2 k_{q}(u-vJ\right)}\,dA(I)\\
 &=& \frac1{4\pi}\left(\int_{\mathbb S} \overline{k_{q}(u+vJ)}\frac{1-JI}2\,dA(I)+\int_{\mathbb S} \overline{k_{q}(u-vJ)}\frac{1+JI}2\,dA(I)\right)\\
&=& \overline{k_{q}(u+vJ)}\left(\frac 12-\frac{J\int_{\mathbb S}I\,dA(I)}{8\pi}\right)+\overline{k_{q}(u-vJ)}\left(\frac 12+\frac{J\int_{\mathbb S}I\,dA(I)}{8\pi}\right)\\
&=& \frac12 (\overline{k_{q}(u+vJ)}+\overline{k_{q}(u-vJ)})\ =\ \frac12 (k_{u+vJ}(q)+k_{u-vJ}(q))\\
&=& \frac12\left((1-q\overline w)^{-*}+(1-qw)^{-*}\right).
\end{eqnarray*}
From this formula we deduce
\begin{eqnarray*}
K(\overline q) &=& \frac12\left((1-\overline q\, \overline w)^{-*}+(1-\overline qw)^{-*}\right)\\
&=& \frac12\left( (1- 2\overline q {\rm Re}(\overline w) +\overline q^2 |\overline w|^2)^{-1}(1-\overline q w)+ (1- 2\overline q {\rm Re}(w) +\overline q^2 |w|^2)^{-1}(1-\overline q \overline w)\right)\\
&=& \frac 12 (1- 2\overline q {\rm Re}(w) +\overline q^2 |w|^2)^{-1}(2-\overline q (w+\overline w))
\end{eqnarray*}
and
\begin{eqnarray*}
\overline{K(q)} &=& \frac12\left(\overline{(1- q \overline w)^{-*}+(1- qw)^{-*}}\right)\\
&=& \frac12\left( \overline{(1- 2 q {\rm Re}(\overline w) + q^2 |\overline w|^2)^{-1}(1- q w)+ (1- 2 q {\rm Re}(w) + q^2 |w|^2)^{-1}(1- q \overline w)}\right)\\
&=& \frac 12 (2-(\overline w+ w)\overline q )(1- 2\overline q {\rm Re}(w) +\overline q^2 |w|^2)^{-1}\\
&=& \frac 12 (1- 2\overline q {\rm Re}(w) +\overline q^2 |w|^2)^{-1} (2-\overline q ( w+ \overline w)),
\end{eqnarray*}
where we used that $w+\overline w\in\mathbb R$. Thus we have $K(\overline q)=\overline{K(q)}$ and the function $K(q)$ is quaternionic
intrinsic. Note that $K(q)$ is the sum of functions in $H^2(\mathbb B)$ and, as such, it belongs to $H^2(\mathbb B)$.
Moreover, it is a slice regular function in the ball, bounded, and never vanishing. Note that for any positive real number $\nu$ we define $q^\nu={\rm ext}(z^\nu)$ which is everywhere defined. So the $\frac 2p$-power $K^{2/p}$ is defined for any $p$ and it is a quaternionic intrinsic bounded slice regular function in the ball, hence it is a function in $H^p(\mathbb B)$.

Thanks to the fact that $\tilde{\mu}$ is Carleson for $H^p(\mathbb B)$, we get that
$$\int_{\mathbb B}|K^{2/p}(q)|^p\,d\tilde{\mu}\ \lesssim\ \parallel K^{2/p}\parallel_p^p$$
and thus the following chain of estimates:
\begin{eqnarray}
\int_{\mathbb B}|K^{2/p}(q)|^p\,d\tilde{\mu} &\lesssim& \parallel K^{2/p}\parallel_p^p\\
\nonumber&=& \sup_{I\in\mathbb S} \lim_{r\to 1^-} \frac{1}{2\pi}\int_0^{2\pi}|K^{2/p}(re^{I\theta})|^p\,d\theta \\
\nonumber & = & \sup_{I\in\mathbb S} \lim_{r\to 1^-} \frac{1}{2\pi}\int_0^{2\pi}|K(re^{I\theta})|^2\,d\theta \\
\nonumber&=& \parallel K\parallel_2^2\ \leq\ \frac1{1-|w|}.
\end{eqnarray}
Moreover
\begin{eqnarray}
\int_{\mathbb B}|K^{2/p}(q)|^p\,d\tilde{\mu} &\gtrsim& \int_{S(w)}|K^{2/p}(q)|^p\,d\tilde{\mu}\\
\nonumber&=& \int_{S(w)}|K|^2\,d\tilde{\mu} \geq \int_{S(w)}\frac1{(1-|w|^2)^2}\,d\tilde{\mu}\\
\nonumber&=& \frac{\tilde{\mu}(S(w))}{(1-|w|^2)^2},
\end{eqnarray}
where we used the fact that for all $q\in \mathbb B$ (and thus also for all $q\in S(w)$) the inequality $|K(q)|\geq\frac1{1-|w|^2}$ holds.

We conclude that
$$\tilde{\mu}(S(w)) \lesssim \frac{(1-|w|^2)^2}{1-|w|}\leq 4(1-|w|)$$
and the assertion follows.
\end{proof}

\section{Carleson measures in Bergman spaces}

The Bergman space $\mathcal A^2(U)$ has been studied in a series of papers, see \cite{CGLSS}, \cite{CGS2}, \cite{CGS3}, \cite{CGS4}. In \cite{CCGG} the authors study the weighted $\mathcal A^p(U)$ Bergman spaces and, as a special case, we have the following definition:
\begin{definition}
Let $U\subseteq\mathbb H$ be an axially symmetric s-domain.
For $p > 0$, we define the  slice regular Bergman space $\mathcal A^p(U)$ as the
quaternionic right linear space of all slice regular functions on $U$ such that
\begin{equation} \sup_{I\in\mathbb S}\int_{U\cap\mathbb C_I}|f(z)|^p\,d \lambda(z) < \infty ,
\end{equation}
where $\lambda$ is the Lebesgue measure on $\mathbb C_I$.
For $p > 0$, and any fixed $I\in\mathbb S$, we define the  slice regular Bergman space $\mathcal A^p(U_I)$ as the
quaternionic right linear space of all slice regular functions on $U_I=U\cup\mathbb C_I$ such that
\begin{equation} \int_{U\cap\mathbb C_I}|f(z)|^p\,d \lambda(z)<\infty .
\end{equation}
\end{definition}
As it is proved in \cite{CCGG}, the slice regular Bergman space $\mathcal A^p(U)$ can be equipped with the norm
\begin{equation}
\|f\|_{\mathcal A^p(U)}=\left(\sup_{I\in\mathbb S}\int_{U\cap\mathbb C_I}|f(z)|^p\,d \lambda(z)\right)^{\frac 1p} .
\end{equation}
From now on, we will consider $U=\mathbb B$, and the norm in $\mathcal A^p(\mathbb B)$ will be denoted by $\|\cdot \|_{\mathcal A^p}$.

In this section, we will characterize Carleson measures for the Bergman spaces.  These measures are defined as follows:
\begin{definition}
Let $\mu$ be a finite, positive, Borel measure on $\mathbb B$.
We say that $\mu$ is a Carleson measure for $\mathcal A^p(\mathbb B)$ if for any $f\in\mathcal A^p(\mathbb B)$
the inequality
\begin{equation}
\int_{\mathbb B} |f(z)|^p\, d\mu \lesssim \| f\|_{\mathcal A^p}^p
\end{equation}
holds.
We say that $\mu$ is a slice Carleson measure for $\mathcal A^p(\mathbb B)$ if for any $f\in\mathcal A^p(\mathbb B)$
the inequality
\begin{equation}\label{sliceCAp}
\int_{\mathbb B_I} |f(z)|^p\, d\mu_I \lesssim \| f\|_{\mathcal A^p}^p
\end{equation}
holds, where $\mu_I=\mu_{\mathbb R}+\tilde\mu_I^++\tilde\mu_{-I}^+$.
\end{definition}
The argument used in the proof of Proposition \ref{sliceCarleson} yields the following:
\begin{proposition}
If a measure $\mu$ is slice Carleson for $\mathcal A^p(\mathbb B)$ then it is also Carleson.
\end{proposition}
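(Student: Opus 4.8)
The plan is to reproduce the proof of Proposition \ref{sliceCarleson} verbatim, replacing the Hardy norm by the Bergman norm throughout. The starting point is the disintegration identity \eqref{misure}, which is a purely measure-theoretic decomposition and so applies equally well here; applying it to the function $|f|^p$ for $f\in\mathcal A^p(\mathbb B)$ gives
$$
\int_{\mathbb B}|f(q)|^p\,d\mu(q) = \int_{\mathbb B\cap\mathbb R} |f(x)|^p\,d\mu_{\mathbb R}(x) + \int_{\mathbb S}\left(\int_{\mathbb B_I^+} |f(z)|^p\,d\tilde{\mu}^+_I(z)\right)d\nu(I).
$$

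First I would extract the needed uniform bounds from the slice hypothesis. Writing out \eqref{sliceCAp} with $\mu_I=\mu_{\mathbb R}+\tilde\mu_I^++\tilde\mu_{-I}^+$ gives, for every $I\in\mathbb S$,
$$
\int_{\mathbb B\cap\mathbb R}|f|^p\,d\mu_{\mathbb R} + \int_{\mathbb B_I^+}|f|^p\,d\tilde\mu_I^+ + \int_{\mathbb B_{-I}^+}|f|^p\,d\tilde\mu_{-I}^+ \lesssim \|f\|_{\mathcal A^p}^p .
$$
Since the three summands are nonnegative, each is separately dominated by $\|f\|_{\mathcal A^p}^p$. In particular $\int_{\mathbb B\cap\mathbb R}|f|^p\,d\mu_{\mathbb R}\lesssim\|f\|_{\mathcal A^p}^p$, and $\int_{\mathbb B_I^+}|f|^p\,d\tilde\mu_I^+ \lesssim \|f\|_{\mathcal A^p}^p$ with a constant that does not depend on $I$ (it depends only on $\mu$, as in the definition).

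I would then insert these two bounds into the disintegration identity. The real part contributes at most $\|f\|_{\mathcal A^p}^p$; the spherical integral is bounded by $\|f\|_{\mathcal A^p}^p\int_{\mathbb S}d\nu(I)=\nu(\mathbb S)\,\|f\|_{\mathcal A^p}^p$, and $\nu(\mathbb S)=\tilde\mu(\mathbb B\setminus\mathbb R)<\infty$ because $\mu$ is finite. Hence the whole right-hand side is $\lesssim\|f\|_{\mathcal A^p}^p$, which is precisely the Carleson condition for $\mathcal A^p(\mathbb B)$.

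The argument is essentially obstacle-free; the only point requiring a moment's attention is the uniformity in $I$ of the constant furnished by the slice hypothesis, since this is exactly what licenses pulling the bound outside the $d\nu$-integral and multiplying by the finite total mass $\nu(\mathbb S)$. Note that, unlike the converse implications proved for Hardy spaces, no use is made here of the Splitting Lemma or the Representation Formula, and no regularity of $f$ beyond membership in $\mathcal A^p(\mathbb B)$ is needed.
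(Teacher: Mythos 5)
Your proof is correct and is essentially the paper's own argument: the paper states this proposition with the remark that ``the argument used in the proof of Proposition \ref{sliceCarleson} yields the following,'' i.e.\ precisely the disintegration identity \eqref{misure} combined with the slice hypothesis (uniform in $I$ by definition of slice Carleson) and the finiteness of $\nu(\mathbb S)$. You have merely written out the details that the paper leaves implicit, including the one point genuinely worth flagging, namely the $I$-independence of the constant that justifies integrating the slice bound against $d\nu(I)$.
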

In \cite{AS1} the authors define a pseudohyperbolic metric using the slice regular Moebius transformation:
\begin{equation}\label{Mobius}
\rho(w,\alpha)=|(1- q\overline{\alpha})^{-*}*(q-\alpha)|_{|q=w}=|(1- q\overline{w})^{-*}*(q-w)|_{|q=\alpha}.
\end{equation}
If $\alpha\in\mathbb C_I$ we can consider the restriction $\rho_I$ to $\mathbb C_I$. Then $\rho_I$ reduces to the pseudohyperbolic distance on the slice $\mathbb C_I$:
\[
\rho_I(z,\alpha)=\left|\frac{z-\alpha}{1-z\bar\alpha}\right|.
\]
We define the pseudohyperbolic disc (in $\mathbb C_I$) with center $\alpha$ and radius $r$ as
\[
\Delta_I(\alpha,r)=\{z\in\mathbb C_I\ : \ \rho_I(z,\alpha)<r\},
\]
and
\[
\Delta(\alpha,r)=\{q=x+Jy\in\mathbb H\ : z=x+Iy\in \Delta_I(\alpha,r)\}.
\]
Note that $\Delta(\alpha,r)$ is the axially symmetric completion of $\Delta_I(\alpha,r)$, i.e.
\[
\Delta(\alpha,r)=\underset{I\in\mathbb S}{\bigcup} \Delta_I(\alpha,r).
\]
Let us  denote by $| \Delta_I(\alpha,r)|$ the area of $ \Delta_I(\alpha,r)$. Since $ \Delta_I(\alpha,r)$ is a pseudohyperbolic disc in $\mathbb C_I$, we have
\begin{equation}\label{areaD}
| \Delta_I(\alpha,r)|= \pi \frac{r^2(1-|\alpha|^2)^2}{(1- r^2 |\alpha|^2)^2} .
\end{equation}
Moreover, we recall the following results (see e.g. \cite{DS}) stated using our notations:
\begin{proposition}\label{pro_lemma}
For any fixed $I\in\mathbb S$ and for $r\in (0,1)$ there exist a sequence of points $\{\alpha_k\}\subset \mathbb B_I$ and an integer $n_0$ such that
\begin{equation}\label{alfakappa}
\underset{k=1}{\overset{\infty}\bigcup} \Delta_I(\alpha_k, r)=\mathbb B_I.
\end{equation}
Moreover, no point $z\in\mathbb B_I$ belongs to more than $n_0$ discs $\Delta_I(\alpha_k, R)$, where $R=\frac 12(1+r)$.
\end{proposition}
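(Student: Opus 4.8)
The plan is to exploit the fact that, on a fixed slice, $\rho_I$ is (isometric to) the classical pseudohyperbolic distance, so the statement is the standard covering/bounded-overlap lemma for the hyperbolic disc, and I would reprove it by a maximal separated set argument. The convenient device is to replace $\rho_I$ by the genuine geodesic metric $\beta_I:=\operatorname{arctanh}\rho_I$. Since $\rho_I$ satisfies the sharpened triangle inequality $\rho_I(a,b)\le(\rho_I(a,c)+\rho_I(c,b))/(1+\rho_I(a,c)\rho_I(c,b))$ and $\operatorname{arctanh}$ turns the right-hand side into a sum, $\beta_I$ is an honest metric on $\mathbb B_I$, invariant under the Moebius automorphisms of the slice, and its balls are exactly the pseudohyperbolic discs: $\{z:\beta_I(z,\alpha)<s\}=\Delta_I(\alpha,\tanh s)$. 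Throughout I set $s:=\operatorname{arctanh}r$ and $T:=\operatorname{arctanh}R$, so that $\Delta_I(\alpha,r)=B_{\beta_I}(\alpha,s)$ and $\Delta_I(\alpha,R)=B_{\beta_I}(\alpha,T)$ with $s<T$.

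For the covering \eqref{alfakappa} I would take $\{\alpha_k\}$ to be a maximal $s$-separated subset of $\mathbb B_I$, i.e. a set, maximal with respect to inclusion, with $\beta_I(\alpha_j,\alpha_k)\ge s$ for $j\ne k$. Such a set exists by Zorn's Lemma, and it is countable because the balls $B_{\beta_I}(\alpha_k,s/2)$ are pairwise disjoint (by the triangle inequality for $\beta_I$) and $\mathbb B_I$ is separable. Maximality is precisely what forces the covering: if some $z\in\mathbb B_I$ satisfied $\beta_I(z,\alpha_k)\ge s$ for all $k$, then $\{\alpha_k\}\cup\{z\}$ would still be $s$-separated, a contradiction; hence every $z$ has $\beta_I(z,\alpha_k)<s$ for some $k$, that is $z\in\Delta_I(\alpha_k,r)$, giving $\bigcup_k\Delta_I(\alpha_k,r)=\mathbb B_I$.

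For the bounded overlap I would count with the invariant area. If $z\in\Delta_I(\alpha_k,R)=B_{\beta_I}(\alpha_k,T)$, then by the triangle inequality $B_{\beta_I}(\alpha_k,s/2)\subseteq B_{\beta_I}(z,T+s/2)$, while the balls $B_{\beta_I}(\alpha_k,s/2)$ are pairwise disjoint by $s$-separation. Since $\beta_I$ is invariant under a group of automorphisms acting transitively on $\mathbb B_I$, the area $A(\varrho):=|B_{\beta_I}(\cdot,\varrho)|$ of a ball depends only on its radius $\varrho$, not on its center. Summing the areas of the disjoint balls all contained in the single ball $B_{\beta_I}(z,T+s/2)$ gives $N\cdot A(s/2)\le A(T+s/2)$, where $N$ is the number of indices $k$ with $z\in\Delta_I(\alpha_k,R)$. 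Hence $N\le n_0:=A(T+s/2)/A(s/2)$, a bound independent of $z$, which is the asserted bounded-overlap property.

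The only genuinely substantive point, and the step I would flag as the main obstacle, is the center-independence of $A(\varrho)$, i.e. the bounded geometry of $(\mathbb B_I,\beta_I)$; this is exactly where the Moebius invariance of $\rho_I$ enters, since conjugating by the automorphism carrying a given center to $0$ shows at once that $A(\varrho)$ is independent of the center. If one prefers to avoid hyperbolic area and argue with the Euclidean areas \eqref{areaD} directly, the same estimate follows from the standard comparison that points at bounded $\beta_I$-distance have comparable $1-|\cdot|^2$ (with constants depending only on $r$), which makes each $|\Delta_I(\alpha_k,s/2)|$ uniformly comparable to $|\Delta_I(z,s/2)|$ and bounded below by a fixed fraction of $|\Delta_I(z,\tanh(T+s/2))|$; either route yields the uniform constant $n_0$.
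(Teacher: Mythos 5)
Your proof is correct, and it is worth noting at the outset that the paper itself does not prove this proposition at all: it is stated as a known result, quoted with the reference \cite{DS} (Duren--Schuster), so there is no in-paper argument to compare against; what you have produced is a self-contained proof of the cited lemma. Your route is the standard ``$r$-lattice'' argument from the Bergman-space literature: pass to the genuine metric $\beta_I=\operatorname{arctanh}\rho_I$ (your use of the strengthened triangle inequality for $\rho_I$ together with the addition formula for $\tanh$ is exactly right), take a maximal $s$-separated set with $s=\operatorname{arctanh} r$, get the covering from maximality, and get the bounded overlap by packing the pairwise disjoint balls $B_{\beta_I}(\alpha_k,s/2)$ into the single ball $B_{\beta_I}(z,T+s/2)$ and counting with the M\"obius-invariant area, which is finite and center-independent; this yields the explicit bound $n_0\le A(T+s/2)/A(s/2)$ depending only on $r$, since $T=\operatorname{arctanh}\frac{1+r}{2}<\infty$. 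All the supporting steps check out: Zorn's lemma applies because the union of a chain of $s$-separated sets is $s$-separated, countability follows from disjointness of the half-radius balls plus separability, and your fallback via Euclidean areas (comparability of $1-|\cdot|^2$ at bounded pseudohyperbolic distance, together with \eqref{areaD}) is also sound. Compared with the construction one finds in \cite{DS}, which builds the lattice more concretely, your invariant-area counting is arguably cleaner and generalizes verbatim to other homogeneous settings; the only cosmetic remarks are that $n_0$ should be taken as $\lfloor A(T+s/2)/A(s/2)\rfloor$ to make it an integer, and that for the quaternionic application the whole argument correctly takes place inside the fixed slice $\mathbb B_I\cong\mathbb D$, which is all the paper needs.
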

\begin{proposition}
Let $\mu$ be a finite, positive, Borel measure on $\mathbb B_I$. Then $\mu_I(\Delta_I(\alpha,r))\leq C | \Delta_I(\alpha,r)|$ if and only if $\mu_I(S_I(z))\leq |A_I(z)|^2$.
\end{proposition}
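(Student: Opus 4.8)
The statement is the slice counterpart of the classical Hastings--Luecking equivalence between the Carleson-box and the pseudohyperbolic descriptions of Carleson measures for the Bergman space of the disc. Since $\Delta_I(\alpha,r)$, $S_I(z)$ and $\mu_I$ all live inside the single plane $\mathbb C_I$, and $\mathbb B_I\cong\mathbb D$ isometrically, I would carry out the whole argument inside $\mathbb B_I$, reducing it to a geometric comparison of the two families of sets near $\partial\mathbb B_I$. Away from the boundary both inequalities are automatic from finiteness of $\mu$: if $|\alpha|\le\tfrac12$ then $|\Delta_I(\alpha,r)|$ is bounded below by a constant depending only on $r$ while $\mu_I(\Delta_I(\alpha,r))\le\mu_I(\mathbb B_I)<\infty$, and likewise $|A_I(z)|^2$ is bounded below when $|z|\le\tfrac12$; so in each direction it suffices to treat $\alpha$, $z$ with $|\alpha|,|z|$ close to $1$.

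The geometric input I would record is threefold. First, by \eqref{areaD} one has $|\Delta_I(\alpha,r)|\asymp(1-|\alpha|)^2$ and $|A_I(z)|^2=4(1-|z|)^2$, with constants depending only on $r$, so the two scales agree once $1-|\alpha|\asymp1-|z|$. Second, $\Delta_I(\alpha,r)$ is the Euclidean disc of centre $\tfrac{1-r^2}{1-r^2|\alpha|^2}\alpha$ and radius $\tfrac{r(1-|\alpha|^2)}{1-r^2|\alpha|^2}$; a short computation shows that both this radius and the distance of the centre from $\partial\mathbb B_I$ are comparable to $h:=1-|\alpha|$, so $\Delta_I(\alpha,r)$ sits at angle $\arg\alpha$ within radial and angular distance $\le K h$ of $(1-h)e^{I\arg\alpha}$, for some $K=K(r)$. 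Third, if $\rho_I(z,\alpha)<r$ then $1-|z|\asymp1-|\alpha|$ (the Whitney property of the pseudohyperbolic metric), which is what makes the scales transform correctly under covering.

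Granting this, the implication ``box estimate $\Rightarrow$ disc estimate'' is immediate: for $|\alpha|$ near $1$ the second fact gives $\Delta_I(\alpha,r)\subset S_I(w)$ for a box with $1-|w|\asymp h$, whence
\[
\mu_I(\Delta_I(\alpha,r))\le\mu_I(S_I(w))\lesssim|A_I(w)|^2\asymp h^2\asymp|\Delta_I(\alpha,r)|.
\]

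For the converse I would invoke the covering of Proposition \ref{pro_lemma}. Fix a box $S_I(z)$ and put $h=1-|z|$. Any disc $\Delta_I(\alpha_k,r)$ meeting $S_I(z)$ contains a point $w$ with $1-|w|\le h$, so the Whitney property gives $1-|\alpha_k|\lesssim h$, and then the second geometric fact shows that $\Delta_I(\alpha_k,r)$ has Euclidean radius $\lesssim h$ and lies within Euclidean distance $\lesssim h$ of $S_I(z)$; hence all such discs are contained in one fattened box $\widetilde S$ with $|\widetilde S|\asymp h^2$. Applying the disc estimate to each $\Delta_I(\alpha_k,r)$ and then the finite-overlap bound $\sum_k\chi_{\Delta_I(\alpha_k,r)}\le\sum_k\chi_{\Delta_I(\alpha_k,R)}\le n_0$ from Proposition \ref{pro_lemma} (valid since $\Delta_I(\alpha_k,r)\subseteq\Delta_I(\alpha_k,R)$), I would estimate
\begin{align*}
\mu_I(S_I(z)) &\le\sum_{k:\,\Delta_I(\alpha_k,r)\cap S_I(z)\neq\emptyset}\mu_I(\Delta_I(\alpha_k,r))\\
&\lesssim\sum_{k:\,\Delta_I(\alpha_k,r)\cap S_I(z)\neq\emptyset}|\Delta_I(\alpha_k,r)|\le n_0\,|\widetilde S|\asymp h^2=\tfrac14|A_I(z)|^2,
\end{align*}
the middle step using that each of these discs lies in $\widetilde S$ so that their areas sum, by bounded overlap, to at most $n_0|\widetilde S|$. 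The delicate point --- the part I expect to be the real work --- is exactly this packing estimate: one must verify that only discs of size $\lesssim h$ can reach $S_I(z)$ (so they fit into a box of area $\asymp h^2$) and then turn the bounded overlap of the \emph{enlarged} discs $\Delta_I(\alpha_k,R)$ into summability of the areas $|\Delta_I(\alpha_k,r)|$; the Whitney property of $\rho_I$ supplies the size control, and finiteness of $\mu$ absorbs the finitely many discs that meet the interior region $|z|\le\tfrac12$.
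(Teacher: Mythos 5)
Your argument is correct, but note that the paper offers no proof of this proposition at all: it is recalled from the classical complex theory (``we recall the following results (see e.g. \cite{DS}) stated using our notations''), since $\mu_I$, $\Delta_I(\alpha,r)$ and $S_I(z)$ all live in the single slice $\mathbb B_I\cong\mathbb D$, and your two directions --- enclosing $\Delta_I(\alpha,r)$ in a Carleson box of comparable scale via $1-|c|\asymp r_1\asymp 1-|\alpha|$ for the Euclidean centre $c$ and radius $r_1$, and running the covering of Proposition \ref{pro_lemma} with the finite-overlap packing bound $\sum_k\chi_{\Delta_I(\alpha_k,r)}\le n_0$ for the converse --- reproduce exactly the standard Duren--Schuster argument that the citation points to, so there is no in-paper proof to diverge from. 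The only cosmetic adjustments needed are that your interior cutoff $|\alpha|\le\frac12$ should be an $r$-dependent threshold (the enclosing box $S_I(w)$ with $1-|w|=K(r)(1-|\alpha|)$ exists only when $K(r)(1-|\alpha|)\le 1$, and $K(r)$ may exceed $2$; the complementary region is still handled by finiteness of $\mu$ together with $|\Delta_I(\alpha,r)|\ge\pi r^2(1-|\alpha|)^2$), and that the stated inequality $\mu_I(S_I(z))\le|A_I(z)|^2$ must be read, as you do and as the paper's conventions elsewhere indicate, with an implicit multiplicative constant depending on $r$.
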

We now prove a result characterizing slice Carleson measures.
\begin{theorem}
Let $\mu$ be a finite, positive, Borel measure on $\mathbb B_I$ and let $p\in(0,\infty)$ be fixed. The following are equivalent:
\begin{enumerate}
\item The measure $\mu$ is slice Carleson for $\mathcal A^p(\mathbb B)$.
\item The inequality $\mu_I(\Delta_I(\alpha,r))\leq C | \Delta_I(\alpha,r)|$ holds for all $r\in (0,1)$ for some constant $C$ depending on $r$ only, and for all $\Delta_I(\alpha,r)$, $\alpha\in\mathbb B_I$.
\item The inequality $\mu_I(\Delta_I(\alpha,r))\leq C | \Delta_I(\alpha,r)|$ holds for some $r\in (0,1)$ for some constant $C$ depending on $r$ only, and for all $\Delta_I(\alpha,r)$, $\alpha\in\mathbb B_I$.
\end{enumerate}
\end{theorem}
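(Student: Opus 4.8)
The plan is to collapse all three conditions onto the single statement that $\mu_I$, read as a measure on the complex disc $\mathbb B_I\cong\mathbb D$, is a Carleson measure for the \emph{complex} Bergman space $\mathcal A^p(\mathbb D)$, and then to import the classical Luecking characterization. The equivalence $(2)\Leftrightarrow(3)$ is nothing but the $r$-independence built into Luecking's theorem: the last Proposition turns $\mu_I(\Delta_I(\alpha,r))\lesssim|\Delta_I(\alpha,r)|$ into the box condition $\mu_I(S_I(z))\lesssim|A_I(z)|^2$, and the covering Proposition \ref{pro_lemma} shows that the disc inequality for one radius forces it for every radius. So I would treat $(2)$ and $(3)$ as a single complex Bergman--Carleson condition on $\mathbb B_I$ and spend the real work on $(1)\Leftrightarrow(2)$.

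For $(1)\Rightarrow(2)$ I would restrict the slice Carleson inequality \eqref{sliceCAp} to a distinguished family of test functions. Any complex-holomorphic $F\colon\mathbb B_I\to\mathbb C_I$ with $\int_{\mathbb B_I}|F|^p\,d\lambda<\infty$ extends through \eqref{ext} to a slice regular $\tilde F\in\mathcal A^p(\mathbb B)$ whose restriction to $\mathbb B_I$ is $F$ itself and whose Bergman norm is comparable to $\|F\|_{\mathcal A^p(\mathbb D)}$ by the norm equivalence of \cite{CCGG} (the Bergman analogue of Remark \ref{Hpall}). Feeding these $\tilde F$ into \eqref{sliceCAp} and using $|\tilde F(z)|=|F(z)|$ on $\mathbb B_I$ produces exactly $\int_{\mathbb B_I}|F|^p\,d\mu_I\lesssim\|F\|_{\mathcal A^p(\mathbb D)}^p$, i.e.\ $\mu_I$ is Carleson for $\mathcal A^p(\mathbb D)$; Luecking's theorem then yields $(2)$ and $(3)$.

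For the converse $(2)\Rightarrow(1)$ I would mimic the Splitting Lemma argument of Proposition \ref{sC}. Assuming $(2)$, Luecking's theorem makes $\mu_I$ Carleson for $\mathcal A^p(\mathbb D)$. Given $f\in\mathcal A^p(\mathbb B)$, the Splitting Lemma writes $f_I=F+GJ$ with $F,G$ holomorphic on $\mathbb B_I$ and $|f|^2=|F|^2+|G|^2$, so $|F|,|G|\le|f|$ shows $F,G\in\mathcal A^p(\mathbb D)$. The elementary bound $(|F|^2+|G|^2)^{p/2}\le 2^{p/2}\max(|F|^p,|G|^p)\le 2^{p/2}(|F|^p+|G|^p)$, valid for every $p>0$, lets me split the integral and apply the complex Carleson estimate to $F$ and $G$ separately; bounding $\|F\|_{\mathcal A^p(\mathbb D)}^p$ and $\|G\|_{\mathcal A^p(\mathbb D)}^p$ by $\int_{\mathbb B_I}|f|^p\,d\lambda\le\|f\|_{\mathcal A^p}^p$ closes the estimate and gives \eqref{sliceCAp}, so $\mu$ is slice Carleson.

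The routine steps are the disc/box covering arguments, imported wholesale from the complex theory via the two preceding Propositions. The main obstacle is the bookkeeping at the interface between the slice regular Bergman space and the complex Bergman space on a fixed slice: I must verify that restriction and extension are mutually inverse with two-sided control of norms (the equivalence $\mathcal A^p(\mathbb B)\leftrightarrow\mathcal A^p(\mathbb B_I)$), and that the pointwise identity $|f|^2=|F|^2+|G|^2$ together with the $2^{p/2}$ inequality handles all $p>0$ uniformly. This last point is precisely where the complex strategy of ``taking the $p/2$-power'' breaks down quaternionically, and the Splitting Lemma is what rescues the argument.
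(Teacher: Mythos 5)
Your proposal is correct and follows essentially the same route as the paper: necessity by restricting the slice Carleson inequality to (extensions of) complex holomorphic test functions on $\mathbb B_I$ and invoking the classical Bergman--Carleson characterization, and sufficiency via the Splitting Lemma $f_I=F+GJ$ with the $2^{p/2}$ estimate, exactly as in the paper's proof (which runs the cycle $(1)\Rightarrow(2)\Rightarrow(3)\Rightarrow(1)$ rather than your $(2)\Leftrightarrow(3)$ plus $(1)\Leftrightarrow(2)$, a purely organizational difference). Your explicit verification that the slice-regular extension of $F$ lies in $\mathcal A^p(\mathbb B)$ with comparable norm is bookkeeping the paper leaves implicit in its phrase ``identified with a subset,'' and it is handled correctly.
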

\begin{proof}
 (1)$\Longrightarrow$(2). Let us assume that
$\mu$ is slice Carleson for $\mathcal A^p(\mathbb B)$.Then \eqref{sliceCAp} holds in particular for the classical
Bergman space $\mathcal A^p(\mathbb D)$ identified with a subset of $\mathcal A^p(\mathbb B_I)$ for some fixed $I\in\mathbb S$. Thus (2) follows from the complex case.\\
(2)$\Longrightarrow$(3) is trivial, so we show that (3)$\Longrightarrow$(1). Let $I\in\mathbb S$ be fixed and let us write $f(z)=F(z)+G(z)J$, where $F,G$ are holomorphic functions. We know that the statement holds for $F,G$, because it holds in the complex case, thus
\[
\begin{split}
&\int_{\mathbb B_I}|F(z)|^p d\mu_I \lesssim \int_{\mathbb B_I} |F(z)|^p\, d\lambda\\
&\int_{\mathbb B_I}|G(z)|^p d\mu_I \lesssim \int_{\mathbb B_I} |G(z)|^p\, d\lambda
\end{split}
\]
from which, reasoning as in the proof of Proposition \ref{sC}, we have
\[
\begin{split}
&\int_{\mathbb B_I}|f(z)|^p d\mu_I = \int_{\mathbb B_I} (|F(z)|^2+|G(z)|^2)^{p/2}\, d\mu_I\\
& \leq 2^{p/2} \left(\int_{\mathbb B_I} |F(z)|^p\, d\mu_I +\int_{\mathbb B_I} |G(z)|^p\, d\mu_I\right)\\
& \lesssim  \left(\int_{\mathbb B_I} |F(z)|^p\, d\lambda +\int_{\mathbb B_I} |G(z)|^p\, d\lambda\right)\\
& \leq \| F\|_{\mathcal A^p}^p+ \| G\|_{\mathcal A^p}^p\leq 2  \| f\|_{\mathcal A^p}^p.
\end{split}
\]
\end{proof}
Next result characterizes Carleson measures in terms of the axially symmetric completion of a pseudohyperbolic disc.
\begin{theorem}\label{tubo}
Let $\mu$ be a positive, finite measure on $\mathbb B$ and let $p\in(0,\infty)$ be fixed. The following are equivalent:
\begin{enumerate}
\item The measure $\mu$ is Carleson for $\mathcal A^p(\mathbb B)$.
\item The inequality $\mu(\Delta(\alpha,r))\leq C | \Delta_I(\alpha,r)|$ holds for all $r\in (0,1)$ for some constant $C$ depending on $r$ only, and for all $\Delta_I(\alpha,r)$, $\alpha\in\mathbb B_I$.
\item The inequality $\mu(\Delta(\alpha,r))\leq C | \Delta_I(\alpha,r)|$ holds for some $r\in (0,1)$ for some constant $C$ depending on $r$ only, and for all $\Delta_I(\alpha,r)$, $\alpha\in\mathbb B_I$.
\end{enumerate}
\end{theorem}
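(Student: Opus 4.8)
The plan is to transpose, almost verbatim, the proof of the Hardy analogue Theorem~\ref{thmequiv}, replacing the Carleson box $S(\theta_0,r)$ by the axially symmetric pseudohyperbolic disc $\Delta(\alpha,r)$, the arc length $|A_I|$ by the slice area $|\Delta_I(\alpha,r)|$, the classical Carleson theorem by Luecking's theorem for the complex Bergman space, and Proposition~\ref{sC} by the slice characterization just established. Both the Carleson property and the inequality appearing in (2)--(3) are additive, so I would first split $\mu=\mu_{\mathbb R}+\tilde\mu$ with $\mathrm{supp}\,\mu_{\mathbb R}\subset\mathbb B\cap\mathbb R$ and $\tilde\mu(\mathbb B\cap\mathbb R)=0$, and treat the two pieces separately. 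For $\mu_{\mathbb R}$ all three conditions reduce to their slice versions: a measure supported on the real axis satisfies $\int_{\mathbb B}|f|^p\,d\mu_{\mathbb R}=\int_{\mathbb B_I}|f|^p\,d\mu_{\mathbb R}$, so Carleson and slice Carleson coincide, while $\Delta(\alpha,r)\cap\mathbb R=\Delta_I(\alpha,r)\cap\mathbb R$ gives $\mu_{\mathbb R}(\Delta(\alpha,r))=\mu_{\mathbb R}(\Delta_I(\alpha,r))$; the equivalence, together with the independence of $r$, then follows from the slice characterization and the complex theory. The whole content is therefore carried by $\tilde\mu$, which I would handle as a cycle $(1)\Rightarrow(2)\Rightarrow(3)\Rightarrow(1)$, observing that $(2)\Rightarrow(3)$ is immediate and that closing the cycle automatically produces the independence of $r$.

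For $(3)\Rightarrow(1)$ I would reproduce the projection argument of Theorem~\ref{thmequiv}. Disintegrating $\tilde\mu$ over $\mathbb S$ and projecting onto a fixed slice $\mathbb B_J^+$, put $\sigma=\int_{\mathbb S}\tilde\mu_I^{+\mathrm{proj}}\,d\nu(I)$. The geometric point is that $\rho$ depends on a point only through its real part and modulus, so for $\alpha=a+I_0b$ the set $\{x+Jy:\,x+I_0y\in\Delta_{I_0}(\alpha,r)\}$ is exactly $\Delta_J(a+Jb,r)$; hence the upper part of the tube $\Delta(\alpha,r)$ projects onto $\Delta_J(a+Jb,r)\cap\mathbb B_J^+$ and
\[
\sigma\big(\Delta_J(a+Jb,r)\cap\mathbb B_J^+\big)=\tilde\mu(\Delta(\alpha,r))\lesssim|\Delta_{I_0}(\alpha,r)|=|\Delta_J(a+Jb,r)|.
\]
A reflection across the real axis extends this to every centre in $\mathbb B_J$, so $\sigma$ obeys the slice disc condition and, by Luecking's theorem, is Carleson for $\mathcal A^p(\mathbb B_J)$. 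For $f\in\mathcal A^p(\mathbb B)$ I would then apply the Representation Formula together with the convexity/subadditivity estimates \eqref{disug_p}--\eqref{disug_p1}, bounding $|f(q)|^p\lesssim|f(x+Jy)|^p+|f(x-Jy)|^p$, integrate against $\tilde\mu$, pull the $J$-slice integrands out of the inner integral, and recognise the projected measure $\sigma$, concluding $\int_{\mathbb B}|f|^p\,d\tilde\mu\lesssim\|f\|_{\mathcal A^p(\mathbb B_J)}^p\lesssim\|f\|_{\mathcal A^p}^p$.

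For $(1)\Rightarrow(2)$ I would recycle the intrinsic function built in the proof of Theorem~\ref{thmequiv}: for $w=u+vJ=\alpha$ the function
\[
K(q)=\tfrac12\big((1-q\overline w)^{-*}+(1-qw)^{-*}\big)
\]
is quaternionic intrinsic, bounded, non-vanishing and slice regular, with $|K(q)|\geq(1-|w|^2)^{-1}$ for all $q\in\mathbb B$. Intrinsicity is the crucial feature: $|K|$ is constant on each sphere $[q]$, so this lower bound holds throughout the tube $\Delta(w,r)$, and $K^{4/p}$ is again a bounded intrinsic slice regular function, hence an element of $\mathcal A^p(\mathbb B)$, with
\[
\|K^{4/p}\|_{\mathcal A^p}^p=\int_{\mathbb B_J}|K|^4\,d\lambda\lesssim(1-|w|^2)^{-2},
\]
the estimate being the elementary complex bound $\int_{\mathbb B_J}|1-\overline wz|^{-4}\,d\lambda\approx(1-|w|^2)^{-2}$. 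Testing the Carleson inequality for $\tilde\mu$ against $K^{4/p}$ and restricting to $\Delta(w,r)$ gives
\[
(1-|w|^2)^{-4}\,\tilde\mu(\Delta(w,r))\leq\int_{\Delta(w,r)}|K|^4\,d\tilde\mu\leq\int_{\mathbb B}|K^{4/p}|^p\,d\tilde\mu\lesssim\|K^{4/p}\|_{\mathcal A^p}^p\lesssim(1-|w|^2)^{-2},
\]
whence $\tilde\mu(\Delta(w,r))\lesssim(1-|w|^2)^2\approx|\Delta_J(w,r)|$, which is (2).

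The step I expect to be the main obstacle is the geometric bookkeeping inside $(3)\Rightarrow(1)$: identifying the projection of the symmetric disc $\Delta(\alpha,r)$ with a slice pseudohyperbolic disc, and carrying out the reflection/folding that upgrades the disc estimate from upper-half centres to all of $\mathbb B_J$ so that Luecking's theorem applies to $\sigma$. By contrast $(1)\Rightarrow(2)$ is now essentially free, since it reuses the intrinsic function $K$ of Theorem~\ref{thmequiv}; the only genuinely new ingredient there is the $L^4$ (rather than $L^2$) estimate on $K$, which is the place where the Bergman exponent $2$ in $|\Delta_I(\alpha,r)|$ enters, replacing the exponent $1$ of the Hardy arc length.
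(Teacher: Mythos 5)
Your proof is correct in substance, but it follows a genuinely different route from the paper in \emph{both} nontrivial implications, so a comparison is in order. For $(3)\Rightarrow(1)$ the paper does not disintegrate at all: it covers $\mathbb B$ by the symmetric completions $\Delta(\alpha_k,r)$ of the discs of Proposition~\ref{pro_lemma}, applies the Representation Formula together with the submean-value inequality \eqref{lemma13} (Lemma 13 of \cite{DS}) on the enlarged discs $\Delta_J(\alpha_k,R)$, $R=\frac12(1+r)$, and closes with the area ratio $|\Delta_J(\alpha_k,r)|/|\Delta_J(\alpha_k,R)|\leq (1-R)^{-2}$ and the finite-overlap constant $n_0$; this is self-contained (no splitting $\mu=\mu_{\mathbb R}+\tilde\mu$ and no appeal to the complex Luecking theorem). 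Your projection argument is a valid alternative that makes the Bergman case structurally parallel to Theorem~\ref{thmequiv}; the geometric fact you need, $\Delta(\alpha,r)\cap\mathbb C_J=\Delta_J(a+Jb,r)\cup\Delta_J(a-Jb,r)$, is correct, though your displayed equality $\sigma\bigl(\Delta_J(a+Jb,r)\cap\mathbb B_J^+\bigr)=\tilde\mu(\Delta(\alpha,r))$ is in general only an inequality $\leq$ (when the disc crosses the real axis, the reflected disc also contributes to the upper half-plane part); fortunately $\leq$ is exactly the direction Luecking's condition requires, so nothing breaks. For $(1)\Rightarrow(2)$ the paper introduces the averaged Bergman kernel $H(q)=\frac1{4\pi}\int_{\mathbb S}h_{u+vI}(q)\,dA(I)$ with $h_w(q)=(1-q\overline w)^{-2*}$ and tests $\mu$ against $H^{2/p}$ via an $L^2$ slice estimate, whereas you reuse the Hardy-case kernel $K$ with exponent $4/p$ and the Forelli--Rudin $L^4$ bound; these are equivalent devices (on each slice $|K|^2$ and $|H|$ are comparable where it matters), and your observation that intrinsicity makes $|K|$ constant on each sphere $[q]$ is precisely the correct mechanism by which the slice lower bound propagates to the whole tube. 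One caveat, which you inherit verbatim from the paper's Hardy proof: the bound $|K(q)|\geq (1-|w|^2)^{-1}$ for \emph{all} $q\in\mathbb B$ is false as stated (at $q=0$ one has $K(0)=1$), but writing $w=u+vJ$ it does hold on $\Delta(w,r)$ with a constant depending only on $r$: on the centre slice $|K(z)|\approx_{r} |1-wu|\,(1-|w|^2)^{-1}|1-w^2|^{-1}\gtrsim (1-|w|^2)^{-1}$, using $|1-wu|\geq v/2$, and this localized bound is all that your argument (and the paper's) actually uses. In short, your route buys conceptual uniformity with the Hardy case at the price of the disintegration bookkeeping and reliance on Luecking's theorem, while the paper's covering argument buys a self-contained proof with explicit constants.
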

\begin{proof}
(1)$\Longrightarrow$(2). Let us assume that
$\mu$ is Carleson for $\mathcal A^p(\mathbb B)$. As in the proof of Theorem \ref{thmequiv}, we can assume
that $\mu(\mathbb B\cap\mathbb R)=0$.\\
Let us set $$
H(q)\ =\ \frac1{4\pi}\,\int_{\mathbb S} h_{u+vI}(q)\,dA(I)
$$
where
$$
h_{u+vI}(q)=h_w(q)=(1-q\overline{w})^{-2*}=(1-2\bar q\bar w+\bar q^2\bar w^2)(1-2{\rm Re}(w)\bar q+ | w|^2\bar q^2)^{-2}
$$
is, modulo the factor $\frac{1}{\pi}$, the Bergman kernel for $\mathcal A^2(\mathbb B)$, see \cite{CGS1}, \cite{CGS4}. Reasoning as in the second part of the proof of Theorem \ref{thmequiv}, and using the fact that $\overline{h_w(q)}=h_q(w)$ we have
that
\begin{equation}\label{hqw}
H(q)=\frac1{4\pi}\,\int_{\mathbb S} h_{u+vI}(q)\,dA(I)=\frac{1}{2}\left(h_{u+Iv}(q)+h_{u-Iv}(q)\right)
\end{equation}
and so $H(q)$ belongs to $\mathcal A^2(\mathbb B)$ since it is superposition of functions in $\mathcal A^2(\mathbb B)$. Moreover, we have
$H(\bar q)=\overline{H(q)}$ and so $H(q)$ is quaternionic intrinsic, nonvanishing in $\mathbb B$. By its definition, it is clear that the function $H(q)$ depends also on the variables $u,v$, even though this dependence is not explicitly stated.
The Representation formula implies
\begin{equation}\label{womega}
\begin{split}
h_w(q)+h_{\bar w}(q)&=\frac 12 \Big( h_\omega(q)+h_{\bar \omega}(q) +(h_{\bar \omega}(q))-h_\omega(q))\,II_w\\
&+ h_{\bar \omega}(q)+h_{\omega}(q) + (h_{\omega}(q))-h_{\bar \omega}(q))\, II_w)\Big)\\
&=h_\omega(q)+h_{\bar \omega}(q)
\end{split}
\end{equation}
for any $w, \omega$ belonging to the same sphere.
\\
For any $p>0$ we can consider $H^{2/p}(q)$ (see Proposition \ref{comp}) which is a function in $\mathcal A^p(\mathbb B)$. Since $\mu$ is Carleson for $\mathcal A^p(\mathbb B)$ we have
\begin{eqnarray}\label{13}
\int_{\mathbb B}|H^{2/p}(q)|^p\,d\mu &\lesssim& \parallel H^{2/p}\parallel_{\mathcal A^p(\mathbb B)}^p\\
\nonumber&=& \sup_{I\in\mathbb S} \int_{\mathbb B_I} |H^{2/p}(q)|^p\,d\lambda \\
\nonumber&=& \sup_{I\in\mathbb S} \int_{\mathbb B_I} |H(q)|^2\,d\lambda \\
\nonumber&=& \sup_{I\in\mathbb S} \int_{\mathbb B_I} |h_w(q)+h_{\bar w}(q)|^2\,d\lambda \\
\nonumber& \lesssim & \frac{1}{(1-(u^2+v^2))^2}
\end{eqnarray}
where last inequality is due to the fact that by \eqref{womega} we can choose $w$ on the same complex plane as $q$ and thus the inequality is obtained from the analogous inequality in the complex case.
\\
On the other hand,
\begin{eqnarray}\label{14}
\int_{\mathbb B}|H^{2/p}(q)|^p\,d\mu & \gtrsim & \int_{\Delta(w,r)}|H^{2/p}(q)|^p\,d\mu\\
\nonumber&=& \int_{\Delta(w,r)}|H(q)|^2\,d\mu \\
\nonumber&\geq &  \int_{\Delta(w,r)} \frac1{(1-|w|^2)^2} \,d\mu\\
\nonumber&\geq & \mu(\Delta(w,r)) \frac{(1-r|w|)^4}{(1-|w|^2)^4}.
\end{eqnarray}
Using the hypothesis, we have
\begin{eqnarray}
\nonumber& & \mu(\Delta(w,r)) \frac{(1-r|w|)^4}{(1-|w|^2)^4}\\
\nonumber &\leq & \int_{\mathbb B}|H^{2/p}(q)|^p\,d\mu \\
\nonumber&\lesssim & \| H^{2/p} \|^p_{\mathcal{A}^p} .\\
\end{eqnarray}
Using \eqref{13} and \eqref{14} we obtain
\[
\begin{split}
\mu(\Delta(w,r))&\lesssim \| H\|^{p}_p \frac{(1-|w|^2)^4}{(1-r|w|)^4}\lesssim \frac{(1-|w|^2)^2}{(1-r|w|)^4}\\
&\lesssim |\Delta_I(w,r)| \frac{1}{r^2(1-r|w|)^2} \leq |\Delta_I(w,r)| \frac{1}{r^2(1-r)^2}
\end{split}
\]
and the assertion follows.\\
The fact that (2) implies (3) is obvious, so we show that (3) implies (1). Thus we assume that (3) is in force for some $r\in (0,1)$.
Then we take the axially symmetric completion of both sides of \eqref{alfakappa}, so that we obtain
$$
\mathbb B = \underset{k=1}{\overset{\infty}\bigcup} \Delta (\alpha_k, r) ,
$$
from which it follows that, if $z\in\mathbb C_J$ and $q\in\mathbb C_I$:
\begin{equation}\label{ineqBergman}
\begin{split}
\int_{\mathbb B} |f(q)|^p\, d\mu &\leq \sum_{k=1}^\infty \int_{\Delta (\alpha_k, r)} |f(q)|^p\, d\mu \\
&=\sum_{k=1}^\infty \int_{\Delta (\alpha_k, r)} \left|\frac{1-IJ}{2} f(z)+ \frac{1+IJ}{2}
f(\bar z)\right|^p \, d\mu \\
&\leq C \sum_{k=1}^\infty  \int_{\Delta (\alpha_k, r)} \left( |f(z)|^p+
|f(\bar z)|^p\right) \, d\mu ,\\
\end{split}
\end{equation}
where we used the Representation Formula and \eqref{disug_p}, \eqref{disug_p1} so that $C=1$ or $C=2^{p-1}$.
Lemma 13 in \cite{DS} yields that for all $z\in \Delta_J (\alpha_k, r)$ the following inequality holds:
\begin{equation}\label{lemma13}
|f(z)|^p \leq \frac{4(1-R)^{-4}}{|\Delta_J(\alpha_k, R)|} \int_{\Delta_J(\alpha_k, R)} |f(s)|^p\, d\lambda (s), \qquad R=\frac 12 (1+r).
\end{equation}
Note that if $z\in \Delta_J(\alpha_k, R)$ then $\bar z\in \Delta_J(\overline{\alpha}_k, R)$
thus, continuing the computations in \eqref{ineqBergman}, and using \eqref{lemma13} we obtain
\[
\begin{split}
C &\sum_{k=1}^\infty \int_{\Delta (\alpha_k, r)} \left( |f(z)|^p+
|f(\bar z)|^p\right) \, d\mu \\
\leq C &\sum_{k=1}^\infty\int_{\Delta (\alpha_k, r)}\left(
\frac{4(1-R)^{-4}}{|\Delta_J(\alpha_k, R)|} \int_{\Delta_J (\alpha_k, R)} |f(s)|^p\, d\lambda +
\frac{4(1-R)^{-4}}{|\Delta_J(\overline{\alpha}_k, R)|} \int_{\Delta_J (\overline{\alpha}_k, R)} |f(s)|^p\, d\lambda\right)d\mu\\
\leq C & \sum_{k=1}^\infty  \mu(\Delta(\alpha_k,r))\left(\frac{4(1-R)^{-4}}{|\Delta_J(\alpha_k, R)|}\int_{\Delta_J ({\alpha}_k, R)} |f(s)|^p\,d\lambda +\frac{4(1-R)^{-4}}{|\Delta_J(\overline{\alpha}_k, R)|} \int_{\Delta_J (\overline{\alpha}_k, R)} |f(s)|^pd\lambda\right)\\
\leq C_1 & \sum_{k=1}^\infty  |\Delta_J(\alpha_k,r)|\left(\frac{4(1-R)^{-4}}{|\Delta_J(\alpha_k, R)|}\int_{\Delta_J ({\alpha}_k, R)} |f(s)|^p\,d\lambda +\frac{4(1-R)^{-4}}{|\Delta_J(\overline{\alpha}_k, R)|} \int_{\Delta_J (\overline{\alpha}_k, R)} |f(s)|^pd\lambda\right),
\end{split}
\]
where we obtained last inequality using our assumption in point (3).
Formula \eqref{areaD} yields
\[
\frac{|\Delta_J(\alpha_k,r)|}{|\Delta_J(\alpha_k, R)|}=\frac{r^2(1-R^2|\alpha|^2)^2}{R^2(1-r^2|\alpha|^2)^2}\leq \frac{1}{(1-r^2|\alpha|^2)^2}\leq \frac{1}{(1-R)^2}
\]
and similarly
\[
\frac{|\Delta_J(\alpha_k,r)|}{|\Delta_J(\overline{\alpha}_k, R)|}\leq \frac{1}{(1-R)^2}.
\]
Thus, using these inequalities, Proposition \ref{pro_lemma} and the fact that $\Delta_J(\alpha_k,r)\subset \mathbb B_J$ for any $\alpha_k$, we have
\[
\begin{split}
C_1  \sum_{k=1}^\infty  &|\Delta_J(\alpha_k,r)|\left(\frac{4(1-R)^{-4}}{|\Delta_J(\alpha_k, R)|}\int_{\Delta_J (\overline{\alpha}_k, R)} |f(s)|^p\,d\lambda +\frac{4(1-R)^{-4}}{|\Delta_J(\overline{\alpha}_k, R)|} \int_{\Delta_J (\overline{\alpha}_k, R)} |f(s)|^pd\lambda\right)\\
&\leq C_1  \sum_{k=1}^\infty \frac{4}{(1-R)^{6}}\left(\int_{\Delta_J (\alpha_k, R)} |f(s)|^p\,d\lambda + \int_{\Delta_J (\overline{\alpha}_k, R)} |f(s)|^p\, d\lambda\right)\\
&\leq C_1   \frac{4n_0}{(1-R)^{6}} \left(\int_{\mathbb B_J} |f(s)|^p\,d\lambda + \int_{\mathbb B_J} |f(s)|^p\, d\lambda\right)\\
&\leq C_1    \frac{4n_0}{(1-R)^{6}} \sup_{J\in\mathbb S}\left(\int_{\mathbb B_J} |f(s)|^p\,d\lambda + \int_{\mathbb B_J} |f(s)|^p\, d\lambda\right)\\
&\lesssim    \|f\|_{\mathcal{A}^p(\mathbb B)}^p.
\end{split}
\]
Following the chain of inequalities we have proved that
$$
\int_{\mathbb B} |f(q)|^p\, d\mu \leq \tilde C \|f\|_{\mathcal{A}^p(\mathbb B)}^p
$$
and thus (1) holds.
\end{proof}
\section{Carleson measures and pseudohyperbolic balls}

In this section, let $\alpha=a+Ib\in\mathbb B$ and by $d=d(\alpha,\partial\mathbb B)$ denote the Euclidean distance of $\alpha$ to the boundary of the unit ball.

Let $\rho$ be the pseudohyperbolic distance defined in (\ref{Mobius}) and $B(\alpha ,r)$ be the pseudohyperbolic ball of center $\alpha$ and pseudohyperbolic radius $r$, i.e.
$$B(\alpha,r)\ =\ \{q\in\mathbb B\ |\ \rho(q,\alpha)<r\}\,.$$

The result in Theorem \ref{tubo} is essentially a result on the axially symmetric completion of a pseudohyperbolic disc in a complex plane. A similar result for pseudohyperbolic balls is false. In order to show that, some estimations on the volume and shape of a pseudohyperbolic ball are needed. These cannot be obtained by straightforward generalizing the complex case, since a composition with a Moebius transformation is involved. Thus we will follow the strategy in \cite{AbS}, where a more intrinsic approach is used.

\begin{lemma}\label{lemma2.2} There exists a constant $C_1$ such that for every $\alpha \in\mathbb B$ and $r\in(0,1)$
$$\forall q\in B(\alpha ,r)\ \ \ \frac{1-r}{C_1}\, d\, \leq\, d(q,\partial\mathbb B)\,\leq\, \frac{C_1}{1-r}\, d\,.$$
\end{lemma}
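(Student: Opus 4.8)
The plan is to reduce the statement to a comparability of Euclidean distances and then to compute the pseudohyperbolic distance $\rho$ slicewise through the Representation Formula. Since $\mathbb B$ is the Euclidean unit ball, for any $\beta\in\mathbb B$ one has $d(\beta,\partial\mathbb B)=1-|\beta|$; hence $d=1-|\alpha|$ and $d(q,\partial\mathbb B)=1-|q|$, so the asserted inequalities are exactly the comparability
\[
\frac{1-r}{C_1}(1-|\alpha|)\ \le\ 1-|q|\ \le\ \frac{C_1}{1-r}(1-|\alpha|),\qquad q\in B(\alpha,r).
\]
I write $\alpha=a+Ib\in\mathbb C_I$ and $q=x+Jy\in\mathbb C_J$, and set $z=x+Iy,\ \bar z=x-Iy\in\mathbb C_I$, so that $|z|=|\bar z|=|q|$; I abbreviate $s=1-|q|$ and $\sigma=1-|\alpha|$.

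The key step is an exact slicewise formula for $\rho$. The regular Möbius map $M_\alpha(q)=(1-q\bar\alpha)^{-*}*(q-\alpha)$ is slice regular in $q$ and satisfies $\rho(q,\alpha)=|M_\alpha(q)|$; on $\mathbb C_I$ it restricts to the ordinary Möbius transformation, so $|M_\alpha(z)|=\rho_I(z,\alpha)$ and $|M_\alpha(\bar z)|=\rho_I(\bar z,\alpha)$. Applying the Representation Formula \eqref{distribution_mon} to $M_\alpha$ gives
\[
M_\alpha(q)=\frac{1-JI}{2}\,M_\alpha(z)+\frac{1+JI}{2}\,M_\alpha(\bar z)=:a_0\,w_1+b_0\,w_2,\qquad w_1,w_2\in\mathbb C_I.
\]
Writing $t=\langle I,J\rangle$ for the Euclidean scalar product of $I$ and $J$ as unit vectors, a direct computation gives $|a_0|^2=\tfrac{1+t}{2}$ and $|b_0|^2=\tfrac{1-t}{2}$; the point I would stress is that the cross term vanishes. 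Indeed $\overline{b_0}\,a_0=\tfrac14(IJ-JI)$ is purely imaginary and orthogonal to $I$, while $w_1\overline{w_2}\in\mathbb C_I$, so by the trace identity ${\rm Re}(xy)={\rm Re}(yx)$ one gets ${\rm Re}(a_0 w_1\overline{w_2}\,\overline{b_0})={\rm Re}\big((\overline{b_0}a_0)\,w_1\overline{w_2}\big)=0$. Hence the clean identity
\[
\rho(q,\alpha)^2=|M_\alpha(q)|^2=\frac{1+t}{2}\,\rho_I(z,\alpha)^2+\frac{1-t}{2}\,\rho_I(\bar z,\alpha)^2,
\]
a genuine convex combination of the two coslicial pseudohyperbolic distances.

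With this identity the rest is elementary and mirrors the complex case. Convexity gives $1-\rho(q,\alpha)^2\le\max\{1-\rho_I(z,\alpha)^2,\,1-\rho_I(\bar z,\alpha)^2\}$. The classical disc identity $1-\rho_I(\zeta,\alpha)^2=\frac{(1-|\zeta|^2)(1-|\alpha|^2)}{|1-\bar\alpha\zeta|^2}$, applied at $\zeta=z,\bar z$, together with $|1-\bar\alpha z|\ge 1-|\alpha||q|$ and $|1-\bar\alpha\bar z|\ge 1-|\alpha||q|$, yields
\[
1-\rho(q,\alpha)^2\ \le\ \frac{(1-|q|^2)(1-|\alpha|^2)}{(1-|\alpha||q|)^2}\ \le\ \frac{4\,s\,\sigma}{\max(s,\sigma)^2}\ =\ \frac{4\,\min(s,\sigma)}{\max(s,\sigma)},
\]
where I used $1-|q|^2\le 2s$, $1-|\alpha|^2\le 2\sigma$ and $1-|\alpha||q|=s+\sigma-s\sigma\ge\max(s,\sigma)$. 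On the other hand $q\in B(\alpha,r)$ forces $\rho(q,\alpha)<r$, so $1-\rho(q,\alpha)^2>1-r^2\ge 1-r$. Combining the two estimates gives $\tfrac{\min(s,\sigma)}{\max(s,\sigma)}>\tfrac{1-r}{4}$, that is $\tfrac{1-r}{4}\,\sigma<s<\tfrac{4}{1-r}\,\sigma$, which is the claim with $C_1=4$.

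The main obstacle, and the only genuinely quaternionic point, is the exact convex-combination formula for $\rho^2$, i.e.\ the vanishing of the cross term. This is what one should be suspicious of: a nonzero cross term would permit $|q|\to 1$ (hence $d(q,\partial\mathbb B)\to 0$) while $\rho(q,\alpha)$ stays bounded away from $1$, which would destroy the comparability; it is precisely the orthogonality $\overline{b_0}a_0\perp\mathbb C_I$, coming from the special algebraic form of the Representation Formula coefficients, that rules this out. Everything else reduces, slice by slice, to the one–variable estimates, so no composition with the Möbius transformation has to be inverted or differentiated.
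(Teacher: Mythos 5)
Your proof is correct, and it even yields the lemma with the explicit constant $C_1=4$, but it reaches the slicewise reduction by a different mechanism than the paper. The paper's proof is soft: it asserts the monotonicity $\rho(\alpha,x+yJ)\ge\rho(\alpha,x+yI)$, deduces $B(\alpha,r)\subset\Delta(\alpha,r)$ and that $q=q_1+q_2J\in B(\alpha,r)$ forces $q'=q_1+q_2I\in B(\alpha,r)\cap\mathbb C_I=\Delta_I(\alpha,r)$ with $d(q,\partial\mathbb B)=d(q',\partial\mathbb B)$, and then cites the complex-case statement from \cite{AbS}. You instead prove the exact identity
$$\rho(q,\alpha)^2=\frac{1+t}{2}\,\rho_I(z,\alpha)^2+\frac{1-t}{2}\,\rho_I(\bar z,\alpha)^2,\qquad t=\langle I,J\rangle,$$
via the Representation Formula, and your cross-term computation checks out: $\overline{b_0}a_0=\frac14(IJ-JI)$ is a pure quaternion orthogonal to $I$ while $w_1\overline{w_2}\in\mathbb C_I$, so ${\rm Re}\bigl(\overline{b_0}a_0\,w_1\overline{w_2}\bigr)=0$; and $|1\mp JI|^2=2\pm 2t$ gives $|a_0|^2=\frac{1+t}{2}$, $|b_0|^2=\frac{1-t}{2}$ as you state. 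This identity (essentially the convex-combination formula of Arcozzi--Sarfatti \cite{AS1}) is strictly stronger than the monotonicity inequality the paper invokes without proof --- it implies it, since the weights lie in $[0,1]$ --- and your closing one-variable computation ($1-\rho^2\le 4\min(s,\sigma)/\max(s,\sigma)$ from the disc identity and $1-|\alpha||q|=s+\sigma-s\sigma\ge\max(s,\sigma)$, against $1-\rho^2>1-r^2\ge 1-r$) replaces the citation to \cite{AbS} by a two-line argument; all these elementary estimates are correct, including the case analysis giving $\frac{1-r}{4}\sigma<s<\frac{4}{1-r}\sigma$. What each route buys: the paper's proof is shorter on the page but leans on two external facts (the slicewise monotonicity of $\rho$ and the complex lemma); yours is self-contained, quantitative, and as a by-product reproves the inclusion $B(\alpha,r)\subset\Delta(\alpha,r)$ that Section 5 of the paper uses again later. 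No gap.
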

\begin{proof} Observe that
$$B(\alpha ,r)\cap \mathbb C_I \ =\ \Delta_I(\alpha ,r)$$
and this is one of the connected components of $\Delta(\alpha ,r)\cap \mathbb C_I$.
\\
Let $J\in\mathbb S$ be any imaginary unit. Then
$$\rho(\alpha , x+yJ) \ \geq\ \rho(\alpha ,x+yI)$$
so we have that
$$B(\alpha ,r)\ \subset\ \Delta(\alpha ,r)\,.$$
Hence, if $q=q_1+q_2J\in B(\alpha ,r)$, also $q'=q_1+q_2I\in B(\alpha ,r)$ and $d(q,\partial\mathbb B)= d(q',\partial \mathbb B)$. Thus it is sufficient to prove the statement for all $q'\in B(\alpha ,r)\cap \mathbb C_I$. But this is the corresponding statement for the complex case, see e.g. \cite{AbS}.
\end{proof}

\begin{lemma}\label{d-lemma}Let $\alpha\in\mathbb B_I$. There exist two positive constant $C_2(r),c_2(r)$, depending only on $r$ such that:
$$c_2(r) d^4\ \leq\ |\Delta_I(\alpha,r)|\ \leq C_2(r) d^4$$
\end{lemma}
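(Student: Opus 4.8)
The plan is to obtain both inequalities directly from the explicit area formula (\ref{areaD}), turning the lemma into a pair of elementary one-variable estimates in the quantity $|\alpha|$, which I will rewrite through the boundary distance $d = 1 - |\alpha|$. Because $\alpha \in \mathbb B_I$, the set $\Delta_I(\alpha, r)$ is a genuine pseudohyperbolic disc of $\mathbb C_I$, so (\ref{areaD}) holds without any further Moebius computation, and the whole argument reduces to comparing the numerator and the denominator of (\ref{areaD}) with suitable powers of $d$, all constants being allowed to depend on $r$.

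In order, the steps are as follows. First, I record that $d = 1 - |\alpha|$ for $\alpha \in \mathbb B$, so that $|\alpha| = 1 - d$ with $d \in (0,1)$, and substitute this into (\ref{areaD}). Second, I pinch the denominator: for a fixed $r \in (0,1)$ and every $|\alpha| \in [0,1)$ the factor $1 - r^2|\alpha|^2$ lies in the fixed interval $[\,1 - r^2,\, 1\,]$, so that $(1 - r^2|\alpha|^2)^{-2}$ is bounded above and below by positive constants depending only on $r$; this factor therefore contributes nothing beyond $r$-dependent constants and may be absorbed into $c_2(r)$ and $C_2(r)$. Third, I carry out the core comparison, estimating the numerator $r^2(1 - |\alpha|^2)^2$ from above and from below by constant multiples of $d^4$, uniformly in $\alpha$. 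Fourth, I combine the second and third steps to read off $c_2(r)\, d^4 \le |\Delta_I(\alpha, r)| \le C_2(r)\, d^4$.

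The crux is the third step, where the exponent of $d$ must be pinned down exactly and the comparison constants must be shown to depend on $r$ alone and to stay bounded away from $0$ and $\infty$ uniformly as $\alpha$ ranges over $\mathbb B_I$, in particular in the degenerate limit $|\alpha| \to 1^-$, where both the numerator and the denominator of (\ref{areaD}) approach their extreme values. Once the exponent bookkeeping in the numerator is settled and both factors are controlled by $r$-dependent constants, the two-sided estimate of the lemma follows at once, and explicit choices of $c_2(r)$ and $C_2(r)$ can be recorded in terms of $r$.
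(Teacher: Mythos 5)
Your plan---read everything off the explicit area formula \eqref{areaD}, absorb the denominator $(1-r^2|\alpha|^2)^{-2}\in[1,(1-r^2)^{-2}]$ into the constants, and match the numerator against a power of $d$---is precisely the paper's proof, and your steps one, two and four are fine. But the step you yourself flag as the crux, the comparison of the numerator with $d^4$, fails under the normalization you fix in your first step. With $d=1-|\alpha|$ one has $1-|\alpha|^2=d(1+|\alpha|)=d(2-d)$, so the numerator of \eqref{areaD} is $r^2d^2(2-d)^2$, which is comparable to $d^2$, not $d^4$. The lower bound survives (indeed $r^2d^2(2-d)^2\geq r^2d^2\geq r^2d^4$ since $0<d\leq 1$), but the upper bound $r^2(1-|\alpha|^2)^2\leq C(r)\,d^4$ is simply false: the ratio $(1-|\alpha|^2)^2/d^4=(2-d)^2/d^2$ blows up like $4/d^2$ as $|\alpha|\to 1^-$, and no constant depending only on $r$ can control it. So the ``exponent bookkeeping'' you defer to step three cannot be settled at $4$; carried out honestly with $d=1-|\alpha|$ it yields the two-sided estimate $|\Delta_I(\alpha,r)|\asymp_r d^2$, contradicting the statement you set out to prove.

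The resolution is a convention mismatch rather than a flaw in the strategy. Despite the wording ``Euclidean distance'' at the start of Section 5, the paper's computation substitutes $(1-|\alpha|^2)^2=d^4$, i.e.\ it operates with $d^2=1-|\alpha|^2$ (equivalently $d=\sqrt{1-|\alpha|^2}$, which is comparable to $(1-|\alpha|)^{1/2}$, not to $1-|\alpha|$); the same convention is used throughout Section 5, e.g.\ in the estimates of Lemmas \ref{lemma2.1} and \ref{covering-tubes}. With that reading, \eqref{areaD} becomes $|\Delta_I(\alpha,r)|=\pi r^2 d^4\,(1-r^2|\alpha|^2)^{-2}$, the denominator is pinched between $(1-r^2)^2$ and $1$ exactly as in your second step, and one can record $c_2(r)=\pi r^2$ and $C_2(r)=\pi r^2(1-r^2)^{-2}$. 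So to repair your proposal you must either adopt $d^2=1-|\alpha|^2$ at the outset, or keep $d=1-|\alpha|$ and replace $d^4$ by $d^2$ in the statement; as written, step three asserts an inequality that is not true.
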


\begin{proof} As already remarked, $\Delta_I$ is (see \cite{AS}) a disc of radius $r_1\ =\ r\frac{1-| \alpha |^2}{1-r^2| \alpha |^2}$ centered at the point $q_1\ =\ \frac{1-r^2}{1-r^2| \alpha |^2}\ \alpha$.

Notice that
$$|\Delta_I|\ =\ \pi r_1 ^2\ =\ \pi r^2\frac{(1-| \alpha |^2)^2}{(1-r^2| \alpha |^2)^2}\ = \ \pi r^2\frac{d^4}{(1-r^2(1-d^2)^2)^2}$$

Thus
$$\pi r^2 d^4\leq |\Delta_I|\ \leq \ \pi r^2\frac{d^4}{(1-r^2)^2}\,,$$
since $0<d\leq1$.
\end{proof}

Let $\eta$ be the Lebesgue $4$-dimensional measure normalized so that $\eta(\mathbb B)=1$, and let us now estimate the $\eta$-volume of a pseudohyperbolic ball:

\begin{lemma}\label{lemma2.1} For every $r\in(0,1)$ there exist two constants $C_3(r)$ and $c_3(r)$ such that for every $\alpha \in\mathbb B$
$$c_3(r)\,d^8\ \leq\ \eta( B(\alpha ,r))\ \leq\ C_3(r)\,d^8\,.$$
\end{lemma}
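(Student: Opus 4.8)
The plan is to evaluate the $4$-dimensional volume of $B(\alpha,r)$ in the slice--polar coordinates $q=x+yK$, with $x=\mathrm{Re}\,q\in\mathbb{R}$, $y=|\mathrm{Im}\,q|\ge 0$ and $K=I_q\in\mathbb{S}$, in which the Lebesgue measure of $\mathbb{H}\cong\mathbb{R}^4$ factorizes as $dx\,y^{2}\,dy\,d\sigma(K)$, where $d\sigma$ is the area element of the $2$-sphere $\mathbb{S}$. Fix the slice $\mathbb{C}_I$ containing $\alpha=a+Ib$, $b\ge 0$, and write $M_\alpha(q)=(1-q\bar\alpha)^{-*}*(q-\alpha)$, so that $\rho(q,\alpha)=|M_\alpha(q)|$ and $M_\alpha$ restricts on $\mathbb{C}_I$ to the ordinary Moebius transformation $z\mapsto(z-\alpha)(1-z\bar\alpha)^{-1}$ of modulus $\rho_I(\cdot,\alpha)$. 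The first step is an exact description of $\rho$ off this slice: applying the Representation Formula (Theorem \ref{Repr_formula}) to $M_\alpha$ and writing $u=M_\alpha(x+yI)$, $v=M_\alpha(x-yI)$ for its two slice values, a direct computation in which all mixed terms cancel yields
\[
\rho(x+yK,\alpha)^{2}=\cos^{2}(\tfrac{\phi}{2})\,\rho_I(x+yI,\alpha)^{2}+\sin^{2}(\tfrac{\phi}{2})\,\rho_I(x-yI,\alpha)^{2},
\]
where $\phi\in[0,\pi]$ is the angle between $K$ and $I$. Since $\rho_I(x+yI,\alpha)\le\rho_I(x-yI,\alpha)$ for $y,b\ge 0$, this expresses membership in $B(\alpha,r)$ purely through the two explicit planar distances $\rho_I(x\pm yI,\alpha)$.

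With this formula the volume becomes an iterated integral. For fixed $(x,y)$ the admissible $K$ form a spherical cap $\{\phi<\phi_{\max}(x,y)\}$, so integrating out $K$ produces a factor $2\pi\bigl(1-\cos\phi_{\max}\bigr)$ and
\[
\mathrm{Leb}\bigl(B(\alpha,r)\bigr)=2\pi\!\int\!\!\int_{y>0} y^{2}\bigl(1-\cos\phi_{\max}(x,y)\bigr)\,dx\,dy .
\]
The $(x,y)$-integration runs over $\Delta_I(\alpha,r)\cap\{y>0\}$ (recall $B(\alpha,r)\subseteq\Delta(\alpha,r)$, proved for Lemma \ref{lemma2.2}, and that on the slice itself $B(\alpha,r)\cap\mathbb{C}_I=\Delta_I(\alpha,r)$). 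By Lemma \ref{d-lemma} this region has area $\asymp d^{4}$, and by Lemma \ref{lemma2.2} every point of $B(\alpha,r)$ lies in the shell $d(q,\partial\mathbb{B})\asymp d$; these two facts control the $(x,y)$-integration and supply the first factor $d^{4}$.

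The remaining and decisive task --- whose analogue is absent in the complex disc, where the slice is the whole space --- is the transverse estimate: to show that the combined effect of the weight $y^{2}$ and of the cap size $1-\cos\phi_{\max}$ contributes the second factor $d^{4}$, so that $\mathrm{Leb}(B(\alpha,r))\asymp d^{8}$ and hence $\eta(B(\alpha,r))\asymp d^{8}$. Here one must quantify how $\phi_{\max}(x,y)$ is governed by the dependence of $\rho_I(x-yI,\alpha)$ on the reflected point $x-yI$, and control this \emph{uniformly} as the centre $\alpha$ ranges over all of $\mathbb{B}$ --- in particular as $b\to 0$, i.e. as $\alpha$ approaches the real axis, where the shape of the cap changes character. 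This uniform transverse bound is the main obstacle; once it is in place, matching it against the $d^{4}$ coming from Lemma \ref{d-lemma} gives the two-sided estimate. Equivalently, one may argue by the change of variables $\zeta=M_\alpha(q)$, writing $\mathrm{Leb}(B(\alpha,r))=\int_{r\mathbb{B}}\mathrm{Jac}(M_\alpha^{-1})\,d\zeta$ and using the factorization $\mathrm{Jac}(f)=|f'|^{2}\,\bigl|(f(q)-f(\bar q))(q-\bar q)^{-1}\bigr|^{2}$ of the real Jacobian of a slice regular map into its slice and spherical parts; the spherical factor is then precisely the crux to be estimated uniformly on $r\mathbb{B}$.
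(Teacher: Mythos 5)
Your setup is sound, and in fact your off-slice distance formula is correct: by the Representation Formula $M_\alpha(x+Ky)=c+Ke$ with $c,e$ independent of $K$, and since $\alpha\in\mathbb C_I$ the restriction $M_\alpha|_{\mathbb C_I}$ maps into $\mathbb C_I$, so $u,v\in\mathbb C_I$ and the mixed term $\mathrm{Im}(u\bar v)\times I$ vanishes, giving $\rho(x+yK,\alpha)^2=\cos^2(\phi/2)\,\rho_I(x+yI,\alpha)^2+\sin^2(\phi/2)\,\rho_I(x-yI,\alpha)^2$. Specialized to the sphere $[\alpha]$ (where $u=0$ and the Euclidean offset is $2b\sin(\phi/2)$), this is exactly the computation the paper records as \eqref{stimadistanza}. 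But your argument stops precisely where the lemma's content lies: you name the uniform transverse bound --- that the weight $y^2$ together with the cap size $1-\cos\phi_{\max}(x,y)$ contributes a factor $\asymp d^4$, uniformly in $\alpha$, including the degeneration $b\to 0$ where the cap becomes all of $\mathbb S$ while $y^2$ collapses --- as ``the main obstacle'' and then leave it unproved. As written, neither inequality of the lemma is actually established; moreover the claim that the $(x,y)$-integration ``supplies the first factor $d^4$'' presupposes the very factorization you have not justified, and the lower bound additionally requires exhibiting a region where both factors are simultaneously of the claimed size, which your proposal does not attempt.

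For comparison, the paper sidesteps the iterated integral entirely by sandwiching $B(\alpha,r)$ between explicit sets and treating your problematic near-axis regime as a separate case: if $\rho(\alpha,a)<r$ then $B(\alpha,r)\subset B(a,2r)$, a genuine Euclidean ball of radius $\lesssim d^2$, so the volume is $\lesssim d^8$ at once; if $\rho(\alpha,a)\geq r$ it encloses $B(\alpha,r)$ in the rotationally symmetric tube $B_2(\alpha,r)$ and computes its volume by Pappus--Guldinus as the area of $\Delta_I$ (which is $\asymp d^4$ by Lemma \ref{d-lemma}) times the area of the spherical patch $\Gamma$, bounded by $\lesssim d^4$ via \eqref{stimadistanza} --- this is the quantitative form of your cap estimate, obtained on the single sphere $[\alpha]$ rather than uniformly over all $(x,y)$. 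The lower bound is a one-line inclusion: the Euclidean ball with the same center $q_1$ and radius $r_1=r(1-|\alpha|^2)/(1-r^2|\alpha|^2)\asymp d^2$ as the disc $\Delta_I$ sits inside $B(\alpha,r)$ and has $\eta$-volume $r_1^4\geq r^4d^8$. To complete your route you would need to prove a two-sided version of \eqref{stimadistanza} uniformly in $(x,y)$ over $\Delta_I(\alpha,r)\cap\{y>0\}$ and integrate it; that is doable starting from your formula, but it is the whole proof, not a remark, and the same criticism applies to your alternative via the Jacobian factorization, whose ``spherical factor'' is again deferred rather than estimated.
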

\begin{proof} To get such estimates on the $\eta$-volume of a pseudohyperbolic ball $B(\alpha ,r)$, we will find two domains $B_1(\alpha ,r),B_2(\alpha ,r)$ such that
$$B_1(\alpha ,r) \ \subset  B(\alpha ,r)\ \subset\ B_2(\alpha ,r)$$
and then estimate the $\eta$-volume of these.\vspace{0.3cm}

\noindent
{\it Estimate from above}. We will divide the construction of $B_2(\alpha,r)$ in two cases. Let first $\alpha$ be such that $\rho(\alpha,a)< r$. Then we set
$$B_2(\alpha,r)\ =\ B(a,2r)\ \supset\ B(\alpha,r)\,,$$
and $B(a,2r)$ is simply a Euclidean ball of Euclidean radius (see proof of the previous lemma, or \cite{AS})
$$R\ =\ 2r\frac{1-|a |^2}{1-4r^2| a |^2} \ \leq \ \frac{2r}{1-4r^2}(d^2+b^2)\,.$$
Since $|b|$ is the Euclidean distance between $a$ and $\alpha$, and $a\in \Delta_I(\alpha,r)\subset B(\alpha,r)$, one gets in the same way that
$$|b|\ < \ 2 \cdot \emph{radius}(B(\alpha,r))\ < \ \left(\frac{2r}{1-r^2}\right)d^2\, .$$
Hence
$$R\ \leq\ \frac{2r}{1-4r^2}d^2\left(1+d^2\frac{4r^2}{(1-r^2)^2}\right)\ \leq\ C_4(r)d^2\,.$$
From these inequalities, the desired estimate on the $\eta$-volume follows immediately.\vspace{0.2cm}

\noindent
Second case: $\rho(\alpha,a)\geq r$. Notice that in this case $B(\alpha ,r)\subset B_2(\alpha ,r)$, where
$$B_2(\alpha ,r) \ =\ \{q=x+Jy\in\mathbb B\ |\ \rho(x+Iy,\alpha )<r,\ \rho(a+Jb,\alpha )<r\}\,.$$

Since $B_2(\alpha, r)\cap\mathbb R=\emptyset$, the volume of $B_2(\alpha ,r)$ can be computed using Pappus-Guldinus theorem and thus it is given - up to a renormalization constant - by the area of $\Delta_I$ (estimated in Lemma \ref{d-lemma}) times the area of the set $\Gamma= B_2(\alpha, r)\cap [q_1]$ where $q_1=\ \frac{1-r^2}{1-r^2| \alpha |^2}\ \alpha$ is the Euclidean center of $\Delta_I$.

Let $q$ be any element in $[\alpha]$. Then $q=a+I_q b$ and, by taking a suitable $J\perp I$, we can write $I_q=\cos\theta I+\sin\theta J$, so that $q=a+b(\cos\theta I+\sin\theta J)$.  By direct computation of the pseudohyperbolic distance $\rho(q,\alpha )$, one gets that $q=a+b(\cos \theta I+\sin\theta J)\in B_2(\alpha ,r)$ if and only if
$$\frac{x^2}{d^4+x^2}\ < r\,,$$
where $x$ is the Euclidean distance between $q$ and $\alpha $, i.e.\
\begin{equation}\label{stimadistanza}x^2\ < \ \frac{d^4r}{1-r}\,.\end{equation}

\begin{figure}[h!]
\centering
 \includegraphics[width=6in,height=3.0in,keepaspectratio]{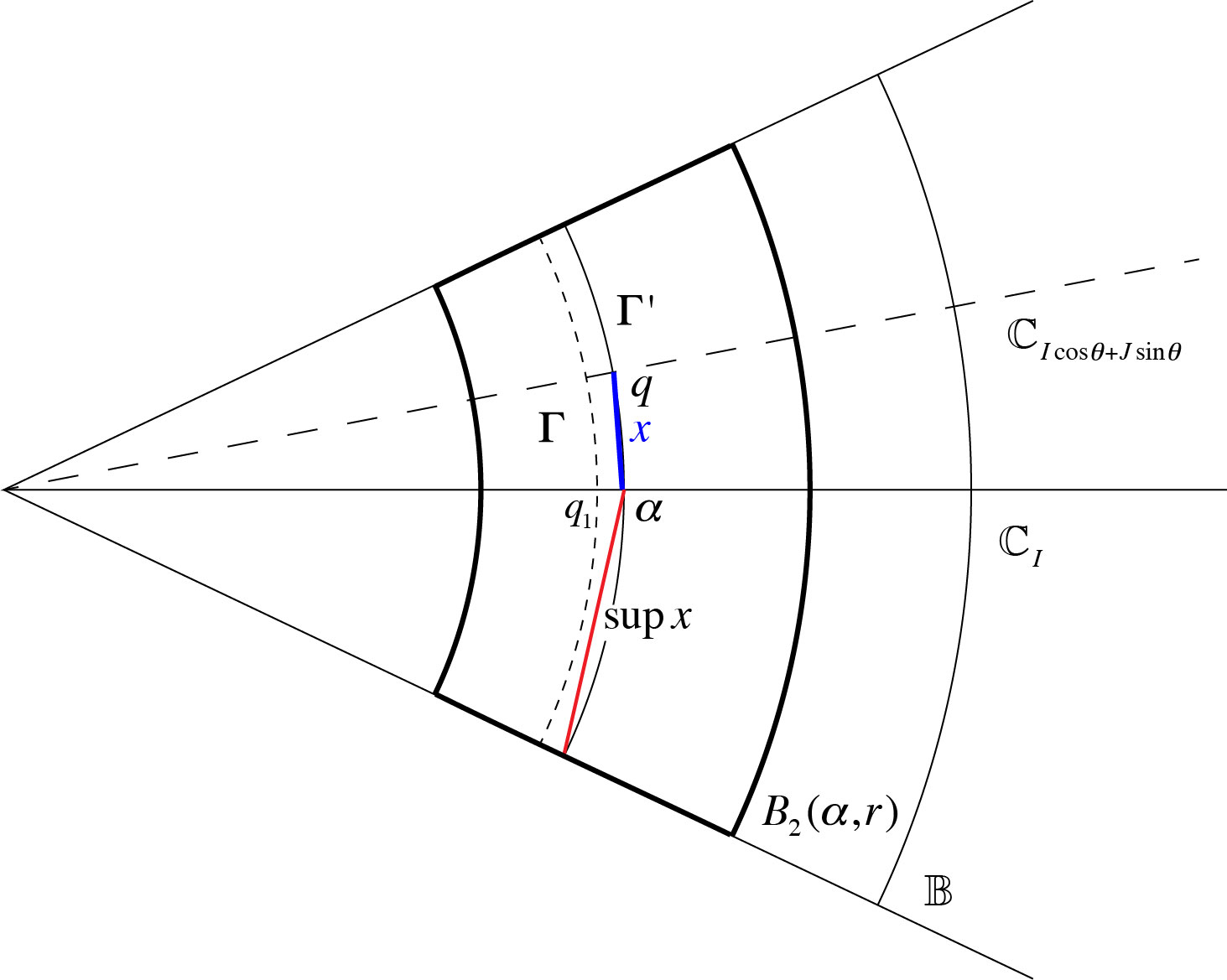}
\caption{}
\label{prep1}
\end{figure}

The area $|\Gamma|$ of $\Gamma$
can be estimated by:

$$|\Gamma|\ \leq\ 2\pi (\sup x^2) \frac{| q_1|^2}{| \alpha |^2}\ =\ 2\pi \frac{d^4r}{1-r}\cdot\frac{1-r^2}{1-r^2| \alpha |^2} \ =\ 2\pi d^4r\frac{1+r}{1-r^2(1-d^2)}\ \leq 2\pi d^4r\frac{1+r}{1-r^2}$$

Thus
$$\eta( B(\alpha ,r))\ \leq C_3(r) d^8$$
as claimed.

\vspace{0.3cm}

\noindent
{\it Estimate from below}. Let us consider again the intersection
$$\Delta_I\ =\  B(\alpha ,r)\cap\mathbb C_I\,.$$
As already observed, $\Delta_I$ is an Euclidean disc of radius $r_1\ =\ r\frac{1-| \alpha |^2}{1-r^2| \alpha |^2}$ centered at the point $q_1\ =\ \frac{1-r^2}{1-r^2| \alpha |^2}\ \alpha$.

The pseudohyperbolic ball $B_1(\alpha ,r)$ with same center and radius of $\Delta_I$ is contained in $B(\alpha ,r)$ and it is a Euclidean ball of radius $r_1$.

Hence, since $\eta(\mathbb B)=1$, the $\eta$-volume of a ball of radius $r_1$ is $r_1^4$.
\begin{eqnarray}\nonumber \eta(B(\alpha ,r)) &\geq& \eta(B_1(\alpha ,r))\ =\ r^4\left(\frac{1-| \alpha |^2}{1-r^2| \alpha |^2}\right)^4 \ \geq\\
\nonumber &\geq& r^4 (1-| \alpha |^2)^4\ =\ r^4 d^8\,.
\end{eqnarray}

\end{proof}

Now we give an estimate on the number of pseudohyperbolic balls needed to cover $\Delta(\alpha, r)$. Recall that by $[\alpha]$ we denote the 2-sphere of points with same real part and same modulus as $\alpha$.

\begin{lemma}\label{covering-tubes} There exist two constants $c_5(r)$ and $C_5(r)$ such that
\begin{enumerate}
\item $\Delta(\alpha,r)$ can be covered with at most $C_5(r)d^{-4}$ balls $B(\alpha_j,4r)$ such that all $\alpha_j\in[\alpha]$;
\item if $\alpha=Iy$, then $\Delta(Iy,r)$ contains at least $c_5(r)d^{-4}$ disjoint balls $B(J_jy,r)$.
\end{enumerate}
\end{lemma}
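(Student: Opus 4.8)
The plan is to reduce both statements to a covering/packing problem on the $2$-sphere $[\alpha]$, whose Euclidean radius is $b=|\mathrm{Im}\,\alpha|$ (so $b=y$ in part (2)). The geometric backbone is that $\Delta(\alpha,r)$ is the axially symmetric sweep of the Euclidean disc $\Delta_I(\alpha,r)$, which by Lemma \ref{d-lemma} has radius $\asymp_r d^2$, together with the fact that a pseudohyperbolic ball $B(\beta,s)$ with $\beta\in[\alpha]$ is, away from the real axis, trapped between two Euclidean balls of radius $\asymp_{r}d^2$ centred near $\beta$ (this is exactly the content of the construction in the proof of Lemma \ref{lemma2.1}). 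Two analytic inputs will be used repeatedly: the explicit same-sphere formula $\rho(q,\beta)^2=\xi^2/(d^4+\xi^2)$, where $\xi$ is the Euclidean distance between $q$ and $\beta$ on their common sphere and $d^4$ is computed from the distance-to-boundary of that sphere (as in the proof of Lemma \ref{lemma2.1}), and the distortion estimate $d(q,\partial\mathbb B)\asymp_r d$ for $q\in B(\alpha,r)$ from Lemma \ref{lemma2.2}. Together these say that an angular cell on $[\alpha]$ covered, or occupied, by one such ball has Euclidean size $\asymp_r d^2$, so that $[\alpha]$ is partitioned into $\asymp b^2/d^4$ cells; since $b<1$ this is $\le C_5(r)\,d^{-4}$, and when $b$ is bounded below it is $\ge c_5(r)\,d^{-4}$.

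For part (1) I would fix a maximal $\delta$-net $\{J_j\}\subset\mathbb S$ with $\delta\asymp_r d^2/b$ and put $\alpha_j=a+J_jb\in[\alpha]$, so that the number of points is $\asymp\delta^{-2}\asymp b^2/d^4\le C_5(r)d^{-4}$. To check that $\bigcup_j B(\alpha_j,4r)\supseteq\Delta(\alpha,r)$, take $q=x+Jy'\in\Delta(\alpha,r)$ with $y'\ge0$, choose $J_j$ nearest to $J$, and split, using that $\rho$ is a genuine distance on $\mathbb B$,
\[
\rho(q,\alpha_j)\ \le\ \rho\big(x+Jy',\,x+J_jy'\big)\ +\ \rho\big(x+J_jy',\,\alpha_j\big).
\]
The second term is $<r$ because, by rotational invariance, $x+J_jy'\in\Delta_{J_j}(\alpha_j,r)$; the first term is a same-sphere distance, which the formula above, the net spacing, and $d(x+Iy',\partial\mathbb B)\asymp_r d$ (Lemma \ref{lemma2.2}) keep below $3r$. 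Hence $\rho(q,\alpha_j)<4r$, which is where the dilation factor $4$ is spent.

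For part (2), with $\alpha=Iy$ and $[\alpha]$ of radius $y$, I would instead select points $J_jy$ on $[\alpha]$ that are pairwise Euclidean-separated by more than the Euclidean diameter $\asymp_r d^2$ of $B(J_jy,r)$; on a sphere of radius $y$ a maximal such family has $\asymp y^2/d^4$ elements. Each $B(J_jy,r)\subset\Delta(J_jy,r)=\Delta(Iy,r)$, the same tube by rotational symmetry, and the enclosing Euclidean balls being disjoint forces the $B(J_jy,r)$ to be disjoint; so they are disjoint balls inside $\Delta(Iy,r)$. When $y$ is bounded below, equivalently $d$ small, this count is $\ge c_5(r)d^{-4}$; in the complementary range $d^{-4}=O(1)$ and the single ball $B(Iy,r)$ already beats $c_5(r)d^{-4}$ once $c_5(r)$ is taken small enough.

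The step I expect to be the real obstacle is the shape control of $B(\beta,s)$: one must show it is Euclidean-comparable rather than stretched all the way around $[\alpha]$, so that the angular cells genuinely have size $\asymp_r d^2$ and the counts $b^2/d^4$ are sharp in both directions. This is precisely where the explicit distance formula on $[\alpha]$ and Lemma \ref{lemma2.2} do the work, and it is also where the degenerate configurations — $\Delta_I(\alpha,r)$ meeting the real axis, or $|\alpha|$ (hence $b$) small — have to be isolated and disposed of by the crude $O(1)$ estimate, since there $[\alpha]$ collapses and $d^{-4}=O(1)$.
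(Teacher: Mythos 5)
Your proposal is correct and follows essentially the same route as the paper: both parts reduce to covering/packing the sphere of imaginary units by caps of Euclidean size $\asymp_r d^2$ (hence $\asymp d^{-4}$ caps), using the same-sphere distance formula \eqref{stimadistanza}, the triangle inequality for $\rho$, and the shape estimates of Lemmas \ref{lemma2.2}--\ref{lemma2.1}, with the degenerate configurations ($\rho(\alpha,a)<r$, resp.\ $d$ bounded below) disposed of by a single ball exactly as in the paper. The only differences are cosmetic: your triangle-inequality split in (1) passes through $x+J_jy'$ rather than the paper's $\beta=a+Jb$ (landing at radius $4r$ instead of $2r$, both admissible), and in (2) your maximal-separated-set argument with Euclidean enclosures makes rigorous what the paper states informally as ``approximately the ratio between these areas.''
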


\begin{proof} (1) If $\alpha=a+Ib$ is such that $\rho(\alpha,a)<r$, then $\alpha\in B(a,r)$ and $B(\alpha,r)\subset B(a,2r)$. Thus it holds $B(\alpha,4r)\supset\Delta(\alpha,r)$.\\

Second case: if $\rho(\alpha,a)\geq r$, then we can choose $C_5(r)d^{-4}$ points $\alpha_j\in[\alpha]$ such that any point $\beta\in[\alpha]$ has the square of the Euclidean distance from at least one of the points $\alpha_j$ less that
$$\frac{r}{1-r}d^4\,.$$
This can be done, indeed, if we choose $C_5(r)d^{-4}$ imaginary units $I_j\in\mathbb S$ with the property that any imaginary unit $J\in \mathbb S$ has the square of the Euclidean distance from at least one of the points $I_j$ less that $\frac{r}{1-r}d^4$, defining $\alpha_j=a+I_jb$ one has the desired property. Thus $$\min\rho(\beta,\alpha_j) \ <\ r\,,$$
thanks to (\ref{stimadistanza}).

Notice that for any point $\gamma=x+Jy\in\Delta(\alpha,r)$ then for $\beta=a+Jb\in[\alpha]$ there exists $\alpha_j$ such that
$$\rho(\gamma,\alpha_j)\ \leq\ \rho(\gamma,\beta)+\rho(\beta,\alpha_j)\ < \ 2r\,$$
thus proving the thesis.\vspace{0,3cm}\\

(2) If $d^2\leq\frac12$, then the inequality holds trivially with $c_5(r)=\frac14$. So we can suppose $d^2>\frac12$. As remarked in the proof of Lemma \ref{lemma2.1} the center of each $B(J_jy,r)$ spans an area $|\Gamma|$
$$|\Gamma|\ \leq 2\pi d^4r\frac{1+r}{1-r^2}\,,$$
while the area spanned by the center of $\Delta(Iy,r)$ is $|T|$
$$|T|\,=\,4\pi\frac{(1-r^2)^2}{(1-r^2(1-d^2))^2}(1-d^2)$$
Estimating $1-d^2$ with $\frac12$, we have that
$$|T|\,\geq\,2\pi\frac{(1-r^2)^2}{\left(1-\frac12r^2\right)^2}\,.$$
The number of disjoint balls contained in the $\Delta(Iy,r)$ is approximately the ratio between these areas, so it is greater then $c_5(r)d^{-4}$, as claimed.
\end{proof}

We can now prove that a result similar to Theorem \ref{tubo} using pseudohyperbolic balls for characterizing Carleson measures does not hold.

Notice that, due to Lemma \ref{lemma2.1} and Lemma \ref{d-lemma}, the Lebesgue measure of balls and their intersections with the $\mathbb C_I$ plane are equivalent to $d$ to some power. Hence a characterization of Carleson measure should be of the form

\emph{$\mu$ is Carleson if and only if $\mu(B(\alpha,r))\leq C d^\beta$ holds for all $r\in (0,1)$ for some constant $C$ depending on $r$ only, and for all $\alpha\in\mathbb B$, }

for some $\beta\in\mathbb R$.
But, actually:

\begin{theorem}Let $\mu$ be a finite, positive, Borel measure on $\mathbb B\subset \mathbb H$
\begin{enumerate}
\item If $\mu$ is Carleson, then $\mu(B(\alpha,r))\leq C d^4$ holds for all $r\in (0,1)$ for some constant $C$ depending on $r$ only, and for all $\alpha\in\mathbb B$.
\item If $\mu(B(\alpha,r))\leq C d^8$ holds for all $r\in (0,1)$ for some constant $C$ depending on $r$ only, and for all $\alpha\in\mathbb B$, then $\mu$ is Carleson.
\item The above are the two best estimates, i.e.\ one cannot fill the gap and get an \emph{if and only if} condition using the $\mu$-measure of pseudohyperbolic balls.
\end{enumerate}
\end{theorem}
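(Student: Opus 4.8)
The plan is to read off both one-sided estimates from the pseudohyperbolic-\emph{disc} characterization of Theorem \ref{tubo}, using the geometric comparisons in Lemmas \ref{lemma2.2}--\ref{covering-tubes}, and then to block any \emph{if and only if} statement by constructing two measures that saturate the two exponents. For part (1) I would use the inclusion $B(\alpha,r)\subset\Delta(\alpha,r)$ established in the proof of Lemma \ref{lemma2.2}: if $\mu$ is Carleson, Theorem \ref{tubo} gives $\mu(\Delta(\alpha,r))\lesssim|\Delta_I(\alpha,r)|$ for every $r$, and Lemma \ref{d-lemma} gives $|\Delta_I(\alpha,r)|\lesssim d^4$, so $\mu(B(\alpha,r))\le\mu(\Delta(\alpha,r))\lesssim d^4$.

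For part (2), since by Theorem \ref{tubo} it suffices to verify the disc condition for a single $r$, I fix $r<1/4$ and cover $\Delta(\alpha,r)$ by at most $C_5(r)d^{-4}$ balls $B(\alpha_j,4r)$ with $\alpha_j\in[\alpha]$ (Lemma \ref{covering-tubes}(1)). As $\alpha_j\in[\alpha]$ forces $d(\alpha_j,\partial\mathbb B)=d$, the hypothesis gives $\mu(B(\alpha_j,4r))\le Cd^8$, whence
\[
\mu(\Delta(\alpha,r))\le\sum_j\mu(B(\alpha_j,4r))\lesssim d^{-4}\cdot d^8=d^4\lesssim|\Delta_I(\alpha,r)|,
\]
and Theorem \ref{tubo} yields that $\mu$ is Carleson.

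For part (3) I would show that no single exponent $\beta$ can produce an equivalence. To defeat every $\beta>4$ I take $\mu$ to be the area measure $\lambda$ on a fixed slice $\mathbb B_{I_0}$: since $\Delta(\alpha,r)\cap\mathbb C_{I_0}$ has area $\lesssim|\Delta_I(\alpha,r)|$, Theorem \ref{tubo} makes $\mu$ Carleson, whereas for $\alpha\in\mathbb C_{I_0}$ one has $\mu(B(\alpha,r))=|\Delta_{I_0}(\alpha,r)|\approx d^4$, so the necessary estimate $\mu(B(\alpha,r))\le Cd^\beta$ fails for $\beta>4$. To defeat every $\beta<8$ (in particular every $\beta\le4$, which is what completes the ``no iff'' claim) I would build an atomic measure from Lemma \ref{covering-tubes}(2): fix $r_0$, choose $y_n\nearrow1$ with $d_n=d(Iy_n,\partial\mathbb B)$ decreasing geometrically (ratio $1/2$), pick inside each $\Delta(Iy_n,r_0)$ the $N_n\approx d_n^{-4}$ disjoint balls $B(p_{n,j},r_0)$ with centers $p_{n,j}\in[Iy_n]$, and set $\mu=\sum_n\sum_{j}m_n\,\delta_{p_{n,j}}$ with $m_n=d_n^{\beta}$ for $4<\beta<8$, and $m_n=\varepsilon_n d_n^{4}$ with $\sum_n\varepsilon_n<\infty$ for $\beta\le4$. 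The centers all lie in $\Delta(Iy_n,r_0)$, so $\mu(\Delta(Iy_n,r_0))\ge N_n m_n\approx d_n^{-4}m_n$, giving $\mu(\Delta(Iy_n,r_0))/|\Delta_I(Iy_n,r_0)|\gtrsim m_n d_n^{-8}\to\infty$; by the ``some $r$ iff all $r$'' part of Theorem \ref{tubo} this already forces $\mu$ to be non-Carleson, while $\sum_n N_n m_n<\infty$ makes $\mu$ finite.

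The hard part is the uniform upper bound $\mu(B(\alpha,r))\le C(r)d^{\beta}$ for \emph{all} $r$ and $\alpha$, which is what turns the atomic measure into a genuine counterexample. I would argue that, by Lemma \ref{lemma2.2}, every $q\in B(\alpha,r)$ has $d(q,\partial\mathbb B)$ comparable to $d$, so $B(\alpha,r)$ can meet only the boundedly many levels $n$ whose $d_n$ lies in a fixed ratio-window around $d$ (finite in number because the $d_n$ are geometric); and for each such level the disjointness of the $B(p_{n,j},r_0)$ together with the volume estimate $\eta(B(\cdot,\cdot))\approx d^8$ of Lemma \ref{lemma2.1} bounds the number of level-$n$ atoms inside $B(\alpha,r)$ by a constant, via the packing inequality $\#\le\eta(B(\alpha,2r))/\min_j\eta(B(p_{n,j},r_0))$. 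Summing these contributions gives $\mu(B(\alpha,r))\le C(r)d^{\beta}$. With the choice of $m_n$ above this simultaneously secures finiteness, the upper bound, and the failure of the Carleson property for each $\beta<8$, so the sufficient exponent cannot be lowered below $8$; combined with the slice example (exponent $4$ optimal for necessity), this shows the two estimates are sharp and that no intermediate exponent gives an equivalence.
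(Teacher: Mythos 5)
Your parts (1) and (2) coincide with the paper's proof: (1) is the inclusion $B(\alpha,r)\subset\Delta(\alpha,r)$ from the proof of Lemma \ref{lemma2.2} combined with Theorem \ref{tubo} and Lemma \ref{d-lemma}, and (2) is exactly the covering of $\Delta(\alpha,r)$ by $C_5(r)d^{-4}$ balls $B(\alpha_j,4r)$ with $d(\alpha_j,\partial\mathbb B)=d$ from Lemma \ref{covering-tubes}(1), fed back into Theorem \ref{tubo} (your restriction $r<1/4$ is a sensible hygiene the paper leaves implicit). Your first counterexample in (3) -- Lebesgue area measure on a fixed slice, Carleson by Theorem \ref{tubo} but with $\mu(B(\alpha,r))=|\Delta_{I_0}(\alpha,r)|\approx d^4$ -- is the paper's argument (3-1) verbatim. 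The genuinely different ingredient is your second counterexample. The paper takes an absolutely continuous measure, $\mu|_{\Delta(\alpha_k,r)}=d(\alpha_k,\partial\mathbb B)^{4-\varepsilon}\,\eta/\eta(\Delta(\alpha_k,r))$ on pairwise disjoint tubes, and bounds $\mu(B(\alpha,r))$ by an equidistribution argument: the ball meets at most one tube and is one of a family of at least $c_5(r)d^{-4}$ balls with disjoint traces on that tube, so it can carry at most a $c_5(r)^{-1}d^{4}$ fraction of the tube mass. Your purely atomic measure at the centers $p_{n,j}$ of the Lemma \ref{covering-tubes}(2) balls, with the level localization via Lemma \ref{lemma2.2} (boundedly many geometric levels $d_n$ fit the ratio window around $d$), is a legitimate alternative and in one respect cleaner: the ratio-window step makes the paper's somewhat underjustified ``$B(\alpha,r)$ intersects at most one tube'' claim unnecessary, and does not even require tube disjointness. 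Your closing remark that the $m_n=\varepsilon_n d_n^4$ branch handles $\beta\le 4$ is correct but redundant: since $d\le 1$, any single example with $\beta'\in(4,8)$ satisfies $\mu(B(\alpha,r))\lesssim d^{\beta'}\le d^{\beta}$ for all $\beta\le\beta'$ while being non-Carleson.

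The one soft spot is your per-level packing step, $\#\le\eta(B(\alpha,2r))/\min_j\eta(B(p_{n,j},r_0))$. For this you need the enlarged set to be a genuine pseudohyperbolic ball of radius strictly less than $1$, so that the upper volume bound of Lemma \ref{lemma2.1} applies; when $r\ge 1/2$ (more generally whenever $r+r_0\ge 1$) the ordinary triangle inequality for $\rho$ only places the disjoint balls inside a set of radius $\ge 1$, whose $\eta$-volume is of order $1$ rather than $d^8$, and the count bound collapses. The M\"obius-type strengthened triangle inequality $\rho(q,\alpha)\le\bigl(\rho(q,p)+\rho(p,\alpha)\bigr)/\bigl(1+\rho(q,p)\rho(p,\alpha)\bigr)$, which would fix this, is not established in the paper (it cites \cite{AS1} only for $\rho$ being a distance), so you cannot invoke it without proof. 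Two repairs are available within the paper's toolkit: (i) replace volume packing by the cap computation behind \eqref{stimadistanza} and Lemma \ref{covering-tubes} -- atoms of level $n$ lying in $B(\alpha,r)$ occupy a spherical region of area $\lesssim_r d_n^4$, while disjointness of the $B(p_{n,j},r_0)$ forces pairwise Euclidean separation $\gtrsim_{r_0} d_n^2$ on the sphere $[Iy_n]$ via \eqref{stimadistanza}, giving the $C(r)$ per-level count directly; or (ii) verify the bound only at the fixed radius of the construction, which is in fact all the paper itself does (it too writes $\alpha_{\widetilde k}\in B(\alpha,2r)$ without worrying about $2r\ge 1$, and never checks its $d^{8-\varepsilon}$ bound at radii other than the fixed $r$) -- though then, like the paper, you have not literally verified the ``for all $r$'' form of the hypothesis appearing in item (2) of the theorem. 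With either repair your argument is complete and matches the sharpness conclusion of the paper.
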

\begin{proof}(1) Since $B(\alpha,r)\subset \Delta(\alpha,r)$, if $\mu$ is Carleson, the estimate on the $\mu$-volume of $\Delta(\alpha,r)$ gives the desired estimate on the $\mu$-volume of $B(\alpha,r)$.\vspace{0.3cm}\\

(2) By Lemma \ref{covering-tubes}, $\Delta(\alpha,r)$ can be covered with $C_5(r)d^{-4}$ balls $B(\alpha_j,4r)$, where $|\alpha_j|=|\alpha|$, hence $d(\alpha_j,\partial \mathbb B)=d$ for all $j$. If $\mu(B(\alpha,4r))\leq C d^8$, then $\mu(\Delta(\alpha,r))\leq C_5(r)C d^4$, hence $\mu$ is Carleson.\vspace{0.3cm}\\

(3-1) Estimate (1) is the best possible. Suppose indeed that $\mu$ is the Lebesgue measure on $\mathbb C_i$, $i\in\mathbb S$ fixed. Then $\mu(\Delta(\alpha,r))=|\Delta_i(\alpha,r)|$ if $\alpha\in \mathbb C_i$, and is less then that if $\alpha\not\in \mathbb C_i$. Hence $\mu$ is Carleson, and $\mu(B(\alpha,r)= |\Delta_i(\alpha,r)|\geq c_3(r)d^4$ if $\alpha\in\mathbb C_i$ (thanks to Lemma \ref{d-lemma}).\vspace{0.3cm}\\

(3-2) Estimate (2) is the best possible. Fix $r$. Let us fix a plane $\C_I$ and a sequence of points $\alpha_k=Iy_k\in \mathbb B_I$ ($k\in\mathbb N$) such that $y_k<y_{k+1}$ and the domains $\Delta(\alpha_k,2r)$ are pairwise disjoint. This can easily be done defining the points $Iy_k$ by induction on $k$. Let us define the measure $\mu$ on $\mathbb B$ such that
\begin{enumerate}
\item[(i)] ${\emph supp}\,\mu\ =\ \bigcup_{k\in\mathbb N}\,\Delta(\alpha_k,r)$;
\item[(ii)] $\mu_{|_{\Delta(\alpha_k,r)}}=\frac{d(\alpha_k,\partial \mathbb B)^{4-\varepsilon}}{\eta(\Delta(\alpha_k,r))}\eta$.
\end{enumerate}
By Theorem \ref{tubo}, for every $\varepsilon>0$, $\mu$ is not a Carleson measure.

By Lemma \ref{covering-tubes}, there exists a constant $c_5(r)$ such that for each $k\in\mathbb N$ there are at least $c_5(r)d(\alpha_k,\partial \mathbb B)^{-4}$ disjoint pseudohyperbolic balls of centers $\alpha_k^j=J_{j,k}y_k$, where $J_{j,k}\in\mathbb S$, and pseudohyperbolic radius $r$. For these balls

$$\sum_{j}\mu(B(\alpha_k^j,r))\ \leq\ \mu(\Delta(\alpha_k,r))\ =\ d(\alpha_k,\partial \mathbb B)^{4-\varepsilon}$$
hence
$$\mu(B(\alpha_k^j,r))\ \leq\  \frac1{c_5(r)}\,d(\alpha_k,\partial \mathbb B)^{8-\varepsilon}\,.$$

Let now $B(\alpha,r)$ be any pseudohyperbolic ball of pseudohyperbolic radius $r$. Then it intersect at most one of the domains $\Delta(\alpha_k,r)$, let us say $\Delta(\alpha_{\widetilde{k}},r)$. Arguing as before, this ball is one of a family of at least $c_5(r)d(\alpha,\partial \mathbb B)^{-4}$ pseudohyperbolic balls with disjoint intersection with $\Delta(\alpha_{\widetilde{k}},r)$. For these balls

$$\sum_{j}\mu(B_j)\ \leq\ \mu(\Delta(\alpha_{\widetilde{k}},r))\ =\ d(\alpha_{\widetilde{k}},\partial \mathbb B)^{4-\varepsilon}$$

hence

$$\mu(B(\alpha,r))\ \leq\  \frac1{c_5(r)}\,d(\alpha_{\widetilde{k}},\partial \mathbb B)^{8-\varepsilon}\,.$$

Since $\alpha_{\widetilde{k}}\in B(\alpha,2r)$, by Lemma \ref{lemma2.2},
$$d(\alpha_{\widetilde{k}},\partial \mathbb B)\ \leq\ \frac{C_1}{1-2r} d(\alpha,\partial\mathbb B)\,,$$
hence
$$\mu(B(\alpha,r))\ \leq\ C_6(r)  d(\alpha,\partial \mathbb B)^{8-\varepsilon}\,,$$

for every $\alpha\in\mathbb B$. This show that the $8^{\emph th}$ power in (2) is the best possible.

\end{proof}


\begin{thebibliography}{99}

\bibitem{A} M. Abate, {\it Iteration theory of holomorphic maps on taut manifolds}, Mediterranean Press, Cosenza, 1989.
http://www.dm.unipi.it/abate/libri/libriric/libriric.html

\bibitem{AbS} M. Abate, A. Saracco, {\it Carleson measures and uniformly discrete sequences in strongly pseudoconvex domains}, J. Lond. Math. Soc. {\bf 83} 3 (2011), 587--605.

\bibitem{acs2}
D. {Alpay}, F. {Colombo},  I. {Sabadini},
\newblock {\em Pontryagin  De Branges Rovnyak spaces of slice hyperholomorphic functions},
J. Anal. Math., {\bf 121} (2013), 87--125.


\bibitem{AS1} N. Arcozzi, G. Sarfatti, {\it Invariant metric for the quaternionic Hardy space}, J. Geom. Anal., 25 (2015) 2028--2059.

\bibitem{AS} N. Arcozzi, G. Sarfatti, {\it From Hankel operators to Carleson measures in a quaternionic variable},to appear in Proc. Edinburgh Math. Soc.


\bibitem{carleson} L. Carleson, {\em Interpolations by bounded analytic functions and the corona problem}, Ann. of Math. 76 (1962), 547--559.

\bibitem{CCGG}  C.M.P. Castillo Villalba, F. Colombo, J. Gantner, Jonathan, J. O. Gonz\'alez-Cervantes, {\em Bloch, Besov and Dirichlet spaces of slice hyperholomorphic functions}, Complex Anal. Oper. Theory 9 (2015), 479--517.

\bibitem{CM}
J. A. Cima and P. R. Mercer, {\em Composition operators between Bergman spaces on convex domains in
$C^n$}, J. Operator Theory, 33 (1995), 363--369.

\bibitem{CGLSS} F. Colombo, J. O. Gonz\'alez-Cervantes, M. E. Luna-Elizarrar\'as, I. Sabadini, M. Shapiro, {\em On  two  approaches  to  the  Bergman theory for slice regular functions}, Springer INdAM Series 1, (2013), 39--54.

\bibitem{CGS1} {} F. Colombo, J. O. Gonz\'alez-Cervantes, I. Sabadini,
\emph{The Bergman-Sce transform for slice monogenic functions},
 Math. Meth. Appl. Sci.,  {\bf 34} (2011), 1896--1909.

\bibitem{book_functional}
F. Colombo, I. Sabadini, and D.~C. Struppa.
\newblock {\em Noncommutative functional calculus. Theory and applications of slice hyperholomorphic functions.}, volume 289 of {\em Progress
  in Mathematics},
\newblock Birkh\"auser/Springer Basel AG, Basel, 2011.


\bibitem{CGS2} {} F. Colombo, J. O. Gonz\'alez-Cervantes, I. Sabadini,
\emph{On slice biregular functions and isomorphisms of
Bergman spaces}, Compl. Var. Ell. Equa., {\bf 57}  (2012), 825--839.

\bibitem{CGS3} {} F. Colombo, J. O. Gonz\'alez-Cervantes, I. Sabadini,
\emph{The C-property for slice regular functions and applications to the Bergman space}, Compl. Var. Ell. Equa.,  {\bf 58} (2013), 1355--1372.

\bibitem{CGS4} {} F. Colombo, J. O. Gonz\'alez-Cervantes, I. Sabadini,
\emph{Further properties of the Bergman spaces of slice regular functions
}, Adv. in Geom., {\bf 15} (2015),  469--484.

\bibitem{dGS} C. de Fabritiis, G. Gentili, G. Sarfatti, {\it Quaternionic Hardy spaces}, arXiv:1404.1234.

\bibitem{GGJ} J. Gantner, J. O. Gonz\'alez-Cervantes, T. Janssens, {\em BMO- and VMO spaces of slice hyperholomorphic functions}, arxiv:1609.01633.
\bibitem{bookgss} G. Gentili, C. Stoppato, D.C. Struppa, {\em
 Regular functions of a quaternionic variable}, Springer Monographs in Mathematics. Springer, Heidelberg, 2013.

\bibitem{D} P. L. Duren, {\it Theory of $H^p$ spaces}, Pure and Applied Mathematics, vol.38 Academic Press, New York-London (1970).

\bibitem{DS} P. L. Duren, A. Schuster, {\it Bergman Spaces}, Mathematical Surveys and Monographs, vol.100 American Mathematical Society (2004).

\bibitem{H} W. W. Hastings, \emph{A Carleson measure theorem for Bergman spaces}, Proc. Amer. Math. Soc. 52 (1975), 237--241.

\bibitem{luecking} D. Luecking, {\em A technique for characterizing Carleson measures on Bergman spaces}, Proc. Amer. Math. Soc. 87 (1983), 656--660.

\bibitem{O} V. L. Oleinik, {\em Embeddings theorems for weighted classes of harmonic and analytic functions}, J. Soviet. Math., 9 (1978), 228--243.



\end{thebibliography}
\end{document}